\title{Accessibility and presentability in 2-categories}
\journal{JPAA}
\def\definetac{\newif\iftac}
\else\usepackage{amsthm}\fi
\definecolor{darkgreen}{rgb}{0,0.45,0}
\definecolor{lightgray}{gray}{0.45}
\NewDocumentCommand{\makeabbrev}{mmm}
{
	\yoruk_makeabbrev:nnn { #1 } { #2 } { #3 }
}
\makeabbrev{\mathbf}{b#1}{b,c,d,e,g,h,i,j,k,l,m,n,o,p,q,r,t,u,v,w,x,y,z,%
	B,C,D,E,G,H,I,J,K,L,M,N,O,P,Q,R,T,U,V,W,X,Y,Z}
\makeabbrev{\boldsymbol}{bs#1}{%
	a,b,c,d,e,g,h,i,j,k,l,m,n,o,p,q,r,s,t,u,v,w,x,y,z,%
	A,B,C,D,E,G,H,I,J,K,L,M,N,O,P,Q,R,S,T,U,V,W,X,Y,Z}
\makeabbrev{\mathsf}{sf#1}{a,b,c,d,e,f,g,h,i,j,k,l,m,o,p,q,r,s,t,u,v,w,x,y,z,%
	A,B,C,D,E,F,G,H,I,J,K,L,M,N,O,P,Q,R,S,T,U,V,W,X,Y,Z}
\makeabbrev{\mathfrak}{f#1}{a,b,c,d,e,f,g,h,j,k,l,m,n,o,p,q,r,s,t,u,v,w,x,y,z,%
	A,B,C,D,E,F,G,H,I,J,K,L,M,N,O,P,Q,R,S,T,U,V,W,X,Y,Z}
\makeabbrev{\mathbf}{l#1}{A,B,C,D,E,F,G,H,I,J,K,L,M,N,O,P,Q,R,S,T,U,V,W,X,Y,Z,%
	a,b,c,d,f,h,i,j,k,m,o,p,r,s,t,u,v,w,x,y,z}
\makeabbrev{\mathcal}{c#1}{A,B,C,D,E,F,G,H,I,J,K,L,M,N,O,P,Q,R,S,T,U,V,W,X,Y,Z}
\makeabbrev{\mathbb}{s#1}{A,B,C,D,E,F,G,H,I,J,K,L,M,N,O,P,Q,R,S,T,U,V,W,X,Y,Z}
\newif\ifhyperref
	\let\your@state\state
	\def\state#1{\gdef\currthmtype{#1}\your@state{#1}}
	\let\your@staterm\staterm
	\def\staterm#1{\gdef\currthmtype{#1}\your@staterm{#1}}
	\let\defthm\newtheorem
	\def\currthmtype{}
		\def\autoref#1{\ref*{label@name@#1}~\ref{#1}}
		\def\autoref#1{\ref{label@name@#1}~\ref{#1}}
		\let\old@label\label%
		\def\label#1{%
			{\let\your@currentlabel\@currentlabel%
					\edef\@currentlabel{\currthmtype}%
					\old@label{label@name@#1}}%
			\old@label{#1}}
		\def\defthm#1#2{%
			\newtheorem{#1}{#2}[section]%
			\expandafter\def\csname #1autorefname\endcsname{#2}%
			\expandafter\let\csname c@#1\endcsname\c@thm}
		\def\defthm#1#2{\newtheorem{#1}[thm]{#2}}
\let\SK@label\label\fi
		\let\old@label\label
		\let\your@thm\@thm
		\def\@thm#1#2#3{\gdef\currthmtype{#3}\your@thm{#1}{#2}{#3}}
		\def\currthmtype{}
		\def\label#1{{\let\your@currentlabel\@currentlabel\def\@currentlabel%
					{\currthmtype~\your@currentlabel}%
					\SK@label{#1@}}\old@label{#1}}
		\def\autoref#1{\ref{#1@}}
\newsavebox{\fminipagebox}
\NewDocumentEnvironment{fminipage}{m O{\fboxsep}}
{\par\kern#2\noindent\begin{lrbox}{\fminipagebox}
		\begin{minipage}{#1}\ignorespaces}
			{\end{minipage}\end{lrbox}%
	\makebox[#1]{%
		\kern\dimexpr-\fboxsep-\fboxrule\relax
		\fbox{\usebox{\fminipagebox}}%
		\kern\dimexpr-\fboxsep-\fboxrule\relax
	}\par\kern#2
}
\numberwithin{equation}{section}
\iftac\theoremstyle{plain}\else\theoremstyle{definition}\fi
\iftac\theoremstyle{plain}\else\theoremstyle{remark}\fi
\numberwithin{equation}{section}
\newtheorem*{thm*}{Theorem}
\newtheorem*{notat*}{Notation}
\DeclareSymbolFont{bbold}{U}{bbold}{m}{n}
\DeclareMathOperator*{\colim}{colim}
\let\xto\xrightarrow
\providecommand{\Nearrow}{\rotatebox[origin=c]{45}{$\Rightarrow$}}
\providecommand{\Swarrow}{\rotatebox[origin=c]{225}{$\Rightarrow$}}
\def\twoc#1{\mathsf{#1}}
\def\CAT{\twoc{CAT}}
\def\Cat{\twoc{Cat}}
\def\Lex{\twoc{Lex}}
\def\Rex{\twoc{Rex}}
\def\LFP{\twoc{LFP}}
\def\LP{\twoc{LP}}
\def\Pos{\twoc{Pos}}
\def\Set{\one{Set}}
\def\sSet{\one{sSet}}
\def\op{\mathrm{op}}
\def\coop{\mathrm{coop}}
\def\To{\Rightarrow}
\newcommand{\Nat}{\mathrm{Nat}}
\newlength{\seplen}
\newlength{\sepwid}
\def\firstblank{\,\rule{\seplen}{\sepwid}\,}
\newcommand{\japa}[1]{\text{\begin{CJK}{UTF8}{min}#1\end{CJK}}}
\newcommand{\yon}{\japa{よ}}
\def\lan{\mathrm{lan}}
\def\Lan{\mathrm{Lan}}
\renewcommand{\textbf}[1]{\text{\fontseries{b}\selectfont{\upshape #1}}}
\def\yomod{\upsilon\text{-}{\sf Mod}}
\def\yoth{\upsilon\text{-}{\sf Th}}
\newcommand{\gray}[1]{{\color{lightgray} #1}}
\def\id{\text{id}}
\def\kz{KZ doctrine\xspace}
\def\kzs{{\kz}s\xspace}
\def\bsInd{\boldsymbol{I\kern-.1em nd}}
\let\circ\cdot
\def\gu{\textsc{gu}\xspace}
\newcommand{\one}[1]{\mathrm{#1}}
\def\yc{context\xspace}
\def\hk{\hookrightarrow}
\def\El{\cE}
\setlist[1]{itemsep=0pt}
\def\[{\begin{equation}}
			\def\]{\end{equation}}
\author[1]{Ivan Di Liberti\corref{cor1}}
\ead{diliberti.math@gmail.com}
\author[2]{Fosco Loregian}
\ead{fosco.loregian@taltech.ee}
\affiliation[1]{%
  organization = {Institute of Mathematics, Czech Academy of Sciences},
  addressline  = {\v{Z}itn\'a 25},
  postcode     = {115 67},
  city         = {Prague},
  country      = {Czech Republic}%
}
\affiliation[2]{
  organization = {Tallinn University of Technology},
  addressline  = {Akadeemia tee 21b},
  postcode     = {12618},
  city         = {Tallinn},
  country      = {Estonia}%
}
\begin{document}
\begin{abstract}
	We outline a definition of accessible and presentable objects in a 2-category $\cK$ endowed with a ``KZ context'', that is to say a pair of lax-idempotent monads interacting in a prescribed way; this perspective suggests a unified treatment of many ``Gabriel-Ulmer like'' theorems, asserting how presentable objects arise as reflections of generating ones. We outline the notion of \emph{(Gabriel-Ulmer) envelope} for a KZ context, sufficient to concoct Gabriel-Ulmer duality. We end the paper with a roundup of examples, involving classical (set-based and enriched), low dimensional category theory, and a perspective for future work, rooted in higher category theory and homotopy theory.
\end{abstract}

\maketitle

\section{Introduction}
\begin{quote}
	``\textit{The theory of categories enriched in some base closed category $\cV$, is couched in set-theory; some of the interesting results even require a hierarchy of set-theories. Yet, there is a sense in which the results themselves are of an elementary nature.}'' \hspace*{\fill} \cite{StreetCosmoi1974}
\end{quote}
The theory of accessible and presentable categories has nowadays gained a primary position in categorical algebra due to its connections with categorical model theory \cite{makkai1989accessible}, homotopy theory \cite{Rosicky2009}, universal algebra \cite{adamek2011algebraic}; the definition of locally presentable category is currently considered deeply rooted in set theory, and rightly so.

Intuitively speaking, the notions of accessibility and local presentability are controlled by a regular cardinal $\lambda$: both definitions involve $\lambda$-filtered colimits, the notion of $\lambda$-presentable object, and the presence of a small set of $\lambda$-presentable objects generating the category under $\lambda$-filtered colimits.

The notion of regular cardinal is intrinsically set-theoretical and goes against the grain for most category theorists: yet, we claim that a more intrinsic categorical treatment of accessibility and presentability is possible and that it would help to put the abstract theory of accessibility and presentability in a broader perspective.

The present work stems from the desire to determine precisely to which extent this claim can be attained and what, in the definitions of accessibility and presentability, can be reduced to the implant of \emph{formal} category theory. This would provide a unifying language to make accessibility and presentability two notions that stem from the expressiveness of the language of a 2-category, and a more intrinsic understanding of the many applications of locally presentable categories.

Taking the above incipit of \cite{StreetCosmoi1974}  as an inspiration, we are guided by the following questions:
\begin{enumtag}{q}
	\item \label{q:zero} \emph{To which extent is it possible to outline the formal content underlying the definition of accessibility and presentability for the objects of an abstract 2-category $\cK$?} It turns out that the gist of the definition can be made quite independent from the set-theoretic background, paying a reasonable price: accessible objects can be recognised as those in the image of a suitable \emph{lax idempotent 2-monad} $\bsS$ (a ``KZ doctrine'', or with less dramatic of an acronym, just a \emph{doctrine}) on $\cK$.

	The classical theory can be recovered if the doctrine is $\bsInd_\lambda$, namely the free cocompletion under $\lambda$-filtered colimits. The price to pay is that we cannot characterise accessibility, but only `$\lambda$-accessibility', or $\bsS$-relative accessibility.\footnote{
		$\bsS$ can in principle play the r\^ole of any cocompletion operation whatsoever, for example with respect to a generic \emph{sound doctrine} $\sD$ in the sense of \cite[2.2]{adamek2002classification}; our result frames the fact that given such a sound doctrine $\sD$, a category $A$ is $\sD$-\emph{accessible} if and only if $A\cong \sD\text{-Ind}(S)$ for some small $S$.
	}
	\item \label{q:due} \emph{Can presentable objects internal to a 2-category $\cK$ be characterised as those which are both accessible and cocomplete?} Showing that this is true is the first original result in the present work: cocompleteness in a 2-category must, however, again be specified \emph{relative} to some additional structure; for this purpose, we fix a second doctrine $\bsP$, and we define `$\bsP$-cocompleteness' through the existence of certain left extensions, as in \autoref{cocomplete_obj}, drawing from previous work of C. Walker \cite{walker}.

	The paradigmatic example of such $\bsP$ is the (covariant) presheaf construction, where $\bsP A$ is the category of functors $A^\op\to\Set$.
\end{enumtag}
In short, our approach to the questions above can be summarised as follows.
\begin{quote}
	Let $\cK$ be a 2-category whose objects we think of as locally small categories of some kind; then, accessibility and presentability are defined in terms of the interplay between two doctrines $\bsS,\bsP$, encoded in terms of a natural map $\upsilon : \bsS\To \bsP$: in simple terms, an object $A$ of a 2-category $\cK$ is ($\upsilon$-)accessible if it is of the form $\bsS G$ for some $G$, and it is ($\upsilon$-)presentable if it is a reflective subobject of some $\bsP X$ for some $X$, and $i$ ``plays well with $\bsS$-colimits'', in the sense specified by our \autoref{def_context}.
\end{quote}
This notation is chosen in order to make it evident the classical setting from which we abstract: a category is accessible if and only if it is the $\text{Ind}_\lambda$-completion $\bsInd_\lambda(G)$ of some small category $G$; a category is locally presentable if and only if it is a reflection of some category of presheaves, and the embedding $X \hookrightarrow \bsP A$ preserves filtered colimits.

Thus, a convincing `formal theory of accessibility' for $\cK$ shall provide a convincing explanation and abstraction of the fact that presentable objects in various 2-categories arise as \emph{reflective localisations} of presheaf objects. In this sense, a reasonable soundness request for an abstract theory of accessibility is that the \emph{Gabriel\hyp{}Ulmer representation theorem} \cite{Gabriel1971} (see \cite{centazzo2004generalised} for a general and enlightening perspective on the process of ``completion under a fixed class of shapes'') shall hold when $\Set^{A^\op}$ is substituted by an object of the form $\bsP A$ in $\cK$.

We define a \emph{Gabriel\hyp{}Ulmer envelope} in \autoref{guenvelope} as a 1-cell $\iota_A : A\to\bsD A$ such that $\bsD A \to \bsS(\bsD A)$ exhibits a suitable universal property; in particular $\bsD$ is another doctrine. Given such an envelope, we can define a bi-equivalence of 2-categories
\[
	\yomod : \upsilon\text{-}\Rex^\coop \rightleftarrows \upsilon\text{-}\LP : \yoth
\]
between the 2-category $\LP(\yon)$ of $\yon$-presentable objects in $\cK$, and certain reflective localisations of objects of the form $\bsD G$, collected in a 2-category $\Rex(\yon)$.

We explore the consequences of this definition in our §\ref{sec:gabriel}.

\subsection{Organisation of the paper}
Section 2 starts with an introductory section aimed at establishing notation; all the material in §2.1 is attributed to some reference or likely to be considered folklore among 2-category theorists; however, it is somewhat difficult to find a single comprehensive reference containing explicit mention of all the technical results we are going to need. Thus, section 2 aims to build an easy-to-cite reference, as comprehensive as possible, for the fundamental results on doctrines.

The central notion of \emph{admissibility} with respect to a doctrine is the main topic of §2.2; this classical notion is the building block for the original material that follows: our \autoref{def_context}, that is in its own right the main building block for the whole paper. Finally, §\ref{sec_acc_pres} introduces the notions of accessibility and presentability relative to a context; the representation theorem characterising presentable objects as accessible and cocomplete is \autoref{the-main}.

Section 3 is aimed to prove Gabriel-Ulmer duality in an abstract context; we introduce the notion of \emph{Gabriel-Ulmer envelope} relative to a context $\upsilon : \bsS \To \bsP$ and sketch its relation with a notion of Cauchy-completeness in §\ref{morita_in_ctx}. The notion of \gu envelope is, at the best of our knowledge, an original contribution, and we outline its fundamental r\^ole in the proof of Gabriel-Ulmer duality in §\ref{gu_in_ctx}: the main theorem is \autoref{guduality}.

Section 4 studies examples: classical presentable categories in $\CAT$, $\sD$-presentability with respect to a sound doctrine, posets regarded as $\{0,1\}$-enriched categories (cf. \cite{porst2011algebraic}), the general case of enriched categories. We end the paper with a subsection of conjectures, aimed at future development.

\section*{Acknowledgments}

The present paper was initially written when both authors were appointed at Masaryk University, Brno; it was almost completely rewritten years after, to put it in its current form. The first author is currently supported by the Grant Agency of the Czech Republic project EXPRO 20-31529X and RVO: 67985840. The second author is currently supported by the ESF funded Estonian IT Academy research measure (project 2014-2020.4.05.19-0001).
  Both authors are immensely grateful to the anonymous referee of this paper. A first version of this paper contained a couple of faulty arguments and needed extensive polishing; the anonymous referee provided invaluable insight in this sense. Most prominently, they have suggested how to polish \autoref{def_context} and \autoref{caucau} (a notion called \emph{climbability} in an earlier draft) and they have donated \autoref{moritaleftadj}, which fixed a gap in a key result, \autoref{production}.
  The second author owes a special thanks to Marko Halanevych, Iryna Kovalenko, Olena Tsybulska, Nina Garenetska and Vladyslav Troitskyi.
\section{Accessibily and Presentability}
The present section contains the main definitions of the paper, and it is structured in subsections as follows.
\begin{enumtag}{i}
	\item We recap the definition of KZ doctrine in \autoref{defkzdoctrine}, recalling and describing the key examples for our interests.
	\item We discuss the notion of \emph{admissible arrow} in \autoref{defadm} for a given doctrine. This notion is less well-known and is closely related to the `nerve-realisation paradigm' relative to a specified doctrine.
	\item We introduce the notion of KZ context in \autoref{def_context}. This is the technology on which our definition of accessibility and presentability is based. In a nutshell, it amounts to a couple $(\bsS, \bsP)$ of doctrines interacting together.
	\item We define accessible and presentable objects for a KZ context in \autoref{yonacc} and \autoref{yonpres}, and we recover a classical result of the theory of locally presentable categories.
\end{enumtag}
\subsection{A brief introduction to \kzs}
Our go-to reference will be Walker's paper \cite{walker}; although it might be debatable that this is a `standard' reference, it has the merit of being very concise and crafted in the same direction as the present work.

The reader interested in a more wide-ranging introduction to the topic of \kzs (so-called because they were introduced by A. Kock \cite{kock1995monads} and V. Z\"oberlein \cite{zobbe}) or more modern presentations in \cite{marmolejo1997doctrines}, where the notion is given internally to an arbitrary \textbf{Gray}-category.

Contrary to this last very terse presentation, we will aim at the reader unfamiliar with 2-dimensional monad theory, commenting on the definitions along the way with some remarks and examples.
\begin{defn}[\kz, Definition 2 of \cite{walker}]
	\label{defkzdoctrine}
	A \emph{\kz}, or simply a \emph{doctrine} $(\bsS,\sigma)$ on a 2-category $\cK$ consists of
	\begin{enumtag}{kz}
		\item \label{kz_1} A function on objects, $\bsS_0 : \cK_0\to\cK_0$;
		\item \label{kz_2} For every object $ A\in\cK$, a 1-cell $\sigma_A\colon A\to \bsS A$;
		\item \label{kz_3} For every pair of objects $ A\text{ and } B$ and 1-cell $f\colon A\to \bsS B$, a left extension
		\[
			\vcenter{\xymatrix{
					\bsS A\ar[r]^{\bar f} & \dltwocell<\omit>{<2>c_f} \bsS B\\
					A\ar[ur]_{f} \ar[u]^{\sigma_A}&
				}}
			\label{d21}
		\]
		of $f$ along $\sigma_A$ exhibited by a 1-cell $\bar f : \bsS A \to \bsS B$ and an isomorphism $c_f : f \To \bar f \cdot \sigma_A$.
	\end{enumtag}
	Moreover, we require that:
	\begin{enumtag}{kp}
		\item \label{kp_1}  For every object $ A\in\cK$, the left extension
		of $\sigma_A$ as in \eqref{d21} is given by the triangle
		\[
			\vcenter{\xymatrix{
			\bsS A\ar@{=}[r]^{\id _{\bsS A}} & \bsS A\\
			&  A\ar[u]_{\sigma_A} \ar[ul]^{\sigma_A}
			}}
		\]
		filled by the identity 2-cell of $\sigma_A$. Note that this means $c_{\sigma_A}$ is equal to the identity
		2-cell on $\sigma_A$.

		\item \label{kp_2} For any 1-cell $g\colon B\to \bsS C $, the corresponding
		left extension $\bar{g}\colon \bsS B\to \bsS C $
		preserves the left extension $\bar{f}$ in \eqref{d21}. Recall that this means that in thediagram below, the $2$-cell exibits $\bar{g}\bar{f}$ as the left Kan extension of $\bar{g}f$ along $\sigma_A$.

			\[
		\vcenter{\xymatrix{
				\bsS B\ar[r]^{\bar g} & \bsS C \dltwocell<\omit>{<2>c_g} & \bsS A \ar[r]^{\bar f} & \bsS B\dltwocell<\omit>{<2>c_f}\ar[r]^{\bar g} & \bsS C \\
				B \ar[u]^{\sigma_B}\ar[ur]_{g} && A\ar[u]^{\sigma_A} \ar[ur]_f
			}}
	\]
	\end{enumtag}
\end{defn}

We will often invoke the following two notions of \emph{representably fully faithful} 1-cell and of \emph{pseudo monomorphism}.
\begin{defn}\label{fufai}
	A 1-cell $i : A \to B$ of a 2-category $\cK$ is called \emph{(representably) fully faithful} if for every object $X\in\cK$ the functor
	\[ i_* : \cK(X,A) \to \cK(X,B) \]
	obtained by post-composition with $i$ is fully faithful.
\end{defn}

\begin{rmk}\label{pseudomono}
	A representably fully faithful $1$-cell is clearly a \emph{pseudo monomorphism} in the following sense: given 1-cells $f,g : E\to A$ and $i : A \to B$ as above, every invertible 2-cell $\alpha : if\To ig : E\to B$ comes from an invertible 2-cell $\bar\alpha : f \To g : E \to A$.
\end{rmk}

\begin{rmk}[Doctrines and lax-idempotent pseudomonads]
	It is well-known \cite{MOGGI199155} that there is a bijective correspondence between monads $T$ on a category $\cC$, equipped with a pair of transformations $(\mu,\eta)$ (multiplication and unit), and pointed endofunctors $T : \cC \to \cC$ equipped with \emph{Kleisli extension} maps
	\[(\firstblank)^* : (f :A\to TB)\mapsto (f^* : TA \to TB).\]
	\autoref{defkzdoctrine} above specifies a structure of the second kind; but every doctrine on $\cK$ can be extended to a (pseudo)monad $(\bsS, \sigma, \mu)$ on $\cK$, and this correspondence remains 1-to-1. More than often thus we treat a doctrine as if it was the associated monad.

	The pseudomonads arising from this construction satisfy an additional property thanks to \ref{kp_1}, \ref{kp_2}: these are called \emph{lax-idempotent} pseudomonads. \cite[Remark 3]{walker} outlines this connection shortly; for our purposes it will be sufficient to describe the functoriality of the pseudomonad associated to a doctrine: given a $1$-cell $f: A \to B$, we call $\bsS_! f$ the $1$-cell below, which exists by \ref{kz_3}:
	\[
		\vcenter{\xymatrix{
		A\ar[d]_f \ar[r]^{\sigma_A} & \bsS A \ar@{.>}[d]^{\bsS_! f := \lan_{\sigma_A}(\sigma_Bf)} \\
		B \ar[r]_{\sigma_B} & \bsS B\ultwocell<\omit>{}
		}}
	\]
\end{rmk}
The universal property of the left extension in question yields that $\bsS_!$ is a functor between hom-categories $\cK(A,B)$ and $\cK(\bsS A, \bsS B)$.
\begin{eg}[Small presheaves and small colimits]\label{smallpre}
	The paradigmatic example of a doctrine is the construction of \emph{small presheaves} $(\bsP, \yon)$, on the $2$-category $\cK=\CAT$ of locally small categories. The functor $\bsP$ is defined on objects mapping a category $A$ to its category of small presheaves $\bsP A$ \cite{ROSICKY1999261,day2007limits}, namely to the locally small full subcategory of $\Set^{A^\op}$, which is not locally small, spanned by those presheaves that result from small colimits of representables.

	The unit of this monad is the Yoneda embedding $\yon_A : A \to \bsP A$ that exists since $A$ is locally small, while the doctrine axioms amount to the fact that $\bsP A$ is the free completion of $A$ under (small) colimits.

	Because of the importance of this doctrine for our paper, it is worth describe in detail the action of the Kleisli extension. Given a diagram
	\[
		\vcenter{\xymatrix{
		A \ar@{}[dr]|(.3){c_f\Swarrow} \ar[d]_{\yon_A} \ar[r]^f & \bsP B \\
		\bsP A \ar[ur]_{\lan_{\yon_A} f} &
		}}
	\]
  the extension $\lan_{\yon_A}(f)(p)$ acts on objects as the following colimit, where $\pi_P: \El(P) \to A$ is the projection from the category of elements of a presheaf $P \in \bsP A$:
	\[\lan_{\yon_A}(f)(P) = \colim (\El(P) \xto{\pi_P} A \xto{f} \bsP B).\]
	Notice that $\colim \pi_P$ exists and can be computed because $P$ is a small presheaf (and thus there exist a small category $C$ and a cofinal functor $C \to \El(P)$) and because $\bsP B$ is small-cocomplete.

\end{eg}
\begin{rmk}[Size issues]
	This example unveils the foundational background on which this paper is built. In most of the concrete circumstances, the collection of objects of our $\cK$ will a be a conglomerate (an `illegitimate class'), and each hom category $\cK(X,Y)$ is locally small. Of course, this could be phrased in terms of Grothendieck universes, leading to no relevant changes.
\end{rmk}
\begin{notat}\label{lr_shriek}
	Along the paper the unit of a doctrine will be denoted with the Greek letter corresponding to its name (so, $\sigma$ in the case of a doctrine $\bsS$). This convention comes in handy to avoid spelling out explicitly which unit is which in most of our proofs, but we stipulate that the small presheaves construction makes an exception, since we instead employ the hiragana `yo' $\yon$ to denote its unit to stress the fact that $\yon_A : A \to \bsP A$ plays the r\^ole of the Yoneda embedding.
\end{notat}
\begin{eg}[Ind-completion and filtered colimits]\label{bsind}
	Another classical example of doctrine is the \emph{Ind-completion} (cf. \cite[Ch. 6]{KS2}) of a category. For a given regular cardinal $\lambda$, we define $\bsInd_\lambda(A)$ as the full subcategory of $\bsP A$ spanned by \emph{$\lambda$-flat} \cite[§6.3]{Bor1} functors. The unit is still the Yoneda embedding, since every representable presheaf is $\lambda$-flat, whichever cardinal $\lambda$ is chosen.

	Again, the doctrine axioms boil down to the fact that the Ind-completion is a free completion operation, this time under $\lambda$-\emph{directed} colimits \cite[Thm. 2.26]{Adamek1994}. This doctrine is again naturally defined on the $2$-category of locally small categories.
\end{eg}
\begin{eg}[Free completion under $\lambda$-small colimits]\label{finitecolimits}
	Another example that will come in handy is the free completion under \emph{$\lambda$-small colimits}. We denote this doctrine $(\bsD_\lambda, \delta)$ or, in case $\lambda=\omega$, just with $\bsD$. $\bsD_\lambda(A)$ can be described as the full subcategory of $\bsP A$ of those presheaves that are $\lambda$-small colimits of representables. This doctrine, as well as other examples, is discussed by \cite[5.9]{Kelly2005}.
\end{eg}
\begin{eg}[Cauchy completion]
	Another important example that we will heavily use in \autoref{sec:gabriel} is the \emph{Cauchy completion} $\hat A$ (also known as Karoubi completion, `completion under absolute colimits') of a category (cf. \cite[§6.5]{Bor1} and \cite{CTGDC_1986__27_2_133_0}). In $\CAT$, the Cauchy completion of a category coincides with the splitting of idempotents, or with the closure of the image of the Yoneda embedding under retracts (which also coincides with the full subcategory of tiny objects, cf. \cite{CTGDC_1986__27_2_133_0}). This presentation for $\hat A$ yields a factorisation of the Yoneda embedding as $A\xto{\gamma_A} \hat A\to\bsP A$. The doctrine axioms can be checked in a similar fashion as the previous cases.
\end{eg}
\begin{rmk}[All the previous examples, $\cV$-enriched]
	Unsurprisingly, most of these examples carry over for $\cV$-enriched categories, as discussed in  \cite[Chap. 5]{Kelly2005}. The $\cV$-enriched presheaf construction is analysed in \cite{day2007limits}; the enriched analogue of Ind-completion is quite old, but is elucidated in \cite{lack2021flat}; \cite{Kelly2005} contains a discussion of the enriched analogue of \autoref{finitecolimits}; the completion under absolute colimits in the enriched setting is also in \cite{Kelly2005}.
\end{rmk}
\begin{eg}[The Yoneda-Cantor embedding]
	Let us now consider the $2$-category $\sf Pos$ of posets, monotone mappings, and the partial order on every hom-set ${\sf Pos}(P,Q)$ defining the 2-cells; this works as a thinner version of the $2$-category of locally small categories and poses no size-issue whatsoever. The down-graded presheaf construction can be simplified in a very instructive manner. Let $\mathbb{T} = \{0 < 1\}$ be the usual two-element poset, and
	\[\mathbb{T}^{(\firstblank)^\op} : P \mapsto \Pos(P^\op,\mathbb{T})\]
	the set of antitone functions $P\to T$.

	If we regard $\Pos$ as the category of categories enriched over $\mathbb{T}$; and the unit of the doctrine is given by the \textit{weighted Cantor embedding} $C : P \to \mathbb{T}^{P^\op}$ which maps an element $p\in P$ to the characteristic function $\chi_{\le p}$ of the lower set $\{x\mid x\le p\}$, then the pair $(\mathbb{T}^{(\firstblank)^\op}, C)$ is a doctrine in the sense of \autoref{defkzdoctrine}.

	This construction provides the free completion under suprema in $\Pos$, regarded as the category of $\mathbb T$-categories, and it verifies the doctrine axioms.
\end{eg}

We now turn to the notion of \emph{cocomplete object} for a doctrine, again drawing from \cite{walker}.

\begin{defn}[Cocomplete object]\cite[Def. 4]{walker}\label{cocomplete_obj}
	Given a doctrine $(\bsS,\sigma)$ on a 2-category
	$\cK$, we say an object $ X\in\cK$ is \emph{$\bsS$-cocomplete}
	if for every $g\colon B\to X$
	\[
		\vcenter{\xymatrix{
				\bsS B\ar[r]^{\bar g} & X\dltwocell<\omit>{<2>c_g} & \bsS A \ar[r]^{\bar f} & \bsS B\dltwocell<\omit>{<2>c_f}\ar[r]^{\bar g} & X \\
				B \ar[u]^{\sigma_B}\ar[ur]_{g} && A\ar[u]^{\sigma_A} \ar[ur]_f
			}}
	\]
	there exists a left extension $\bar{g}$ as on the left diagram exhibited
	by an isomorphism $c_{g}$, and moreover the extended map $\bar g$ preserves the left extension $\bar f : \bsS A \to \bsS B$ of any 1-cell $f : A \to \bsS B$ along $\sigma_A$, as in the right diagram.
\end{defn}

\begin{defn}[$\bsS$\hyp{}cocontinuous $1$-cell]\label{cocont_ar}\cite[Def. 4]{walker}
	We say a 1-cell $E : X \to Y$ between $\bsS$-cocomplete objects $X$ and $Y$ is \emph{$\bsS$\hyp{}cocontinuous}  when it preserves all left extensions along $\sigma_B$ into $X$ for every object $B$.
\end{defn}

\begin{rmk}[Cocomplete objects and pseudoalgebras]\label{cocompl_psalg}
	As our running examples are all \textit{free cocompletions} under some family of colimits, it is not hard to guess the correct intuition behind the notion of a cocomplete object for a doctrine. We can summarise the concepts encountered so far in the following table:

	\begin{table}[!h]
		\begin{center}
			\begin{tabular}{ll}
				\textbf{Doctrine}      & \textbf{Cocomplete objects}                 \\ \toprule
				$\bsP$                 & cocomplete categories                       \\ \midrule
				$\bsInd_\lambda$       & categories with $\lambda$-directed colimits \\ \midrule
				$\bsD_\lambda$         & $\lambda$-cocomplete categories             \\ \midrule
				$\widehat{(-)}$ & categories with absolute colimits                         \\ \midrule
				$\mathbb{T}^{(-)^\op}$ & posets with suprema                         \\ \bottomrule
			\end{tabular}
		\end{center}
	\end{table}
	Let us stress that cocomplete objects for a given \kz $\bsS$ are precisely the \emph{pseudoalgebras} of the pseudomonad associated to $\bsS$ \cite[Prop. 6]{walker}. Analogously, $\bsS$\hyp{}cocontinuous $1$-cells are precisely pseudomorphisms of pseudoalgebras.
\end{rmk}
\begin{rmk}[Cocompleteness of free objects]
	With the previous remark in mind it comes without surprise that a free algebra, i.e. an object of the form $\bsS B$ for some $B$, is $\bsS$-cocomplete. This follows on the spot from \ref{kz_3} and \ref{kp_2}.
\end{rmk}
\subsection{The perks of being \texorpdfstring{$\bsS$}{S}-admissible}
\begin{rmk}
	Walker's paper draws a clear connection between doctrines and \emph{Yoneda structures} \cite{street1978yoneda}. Walker is informed about the work of Bunge and Funk \cite{bunge}, where the authors introduce the notion of \emph{admissible $1$-cell} for a doctrine, mimicking the homonymous notion that Street and Walters give for a Yoneda structure. Morally, admissibility is a smallness request on a 1-cell $f :A \to B$; admissibility for a doctrine will play a fundamental r\^ole in all that follows.
\end{rmk}
\begin{defn}[$\bsS$-Admissible $1$-cell]
	\label{defadm} Given a \kz $(\bsS,\sigma)$ on a 2-category $\cK$, we say a 1-cell $f\colon A\to B$
	is \emph{$\bsS$\hyp{}admissible} when  there exists a left extension $(N_f,\varphi_f)$ of
	$\sigma_A$ along $f$ as in the left diagram,
	\[\label{dis}
		\vcenter{\xymatrix{
		B \ar[r]^{N_f} & \bsS A\dltwocell<\omit>{\varphi_f} & B \ar[r]^{N_f} & \bsS A\ar[r]^{\bar h} \dltwocell<\omit>{\varphi_f} & X \dltwocell<\omit>{<2>c_h} \\
		& A\ar[u]_{\sigma_A}\ar@/^1.5pc/[ul]^f && A\ar[u]^{\sigma_A}\ar[ur]_h \ar@/^1.5pc/[ul]^f\\
		}}
	\]
	and moreover
	the left extension is preserved by any $\bar{h}$, as in the right
	diagram, where $X$ is an $\bsS$-cocomplete object.
\end{defn}
\begin{notat}
	For historical reasons, rooted in algebraic topology, we will call $N_f$ the \emph{nerve} of $f$.
	Evidently, $N_f =\Lan_f\sigma_A$ depends on $A$. When we need to make explicit which doctrine we are considering we use the notation $N_{(\bsS,f)}$.
\end{notat}
\begin{rmk}[Kancellation formula]\label{canc_rule}
	When dealing with equations between Kan extensions, admissible arrows are particularly well behaved. These are precisely those arrows for which the \emph{Kancellation formula} holds:
	\[\lan_f g \cong \lan_{\cancel{\sigma_A}} g \circ \lan_f \cancel{\sigma_A}\]
\end{rmk}
\begin{proof}
	Let $f: A \to B$ be an $\bsS$\hyp{}admissible arrow as in \autoref{defadm}, and $g: A \to X$ be any arrow into a $\bsS$-cocomplete object. We can fill the diagram
	\[
		\vcenter{\xymatrix@R=1.1cm@C=1.1cm{
		A \ar@{}[dr]|(.3){\Nearrow}\ar[r]^g \ar[d]_f & X \\
		B\ar[r]_{N_f}\ar[ur]_{\lan_fg} & \bsS A\ar[u]
		}}
	\]
	The right-hand diagram of \eqref{dis} displays $\bar h \cdot N_f$ as the left extension of $h$ along $f$, because the left extension $\varphi_f$ is preserved by $\bar h$ and $c_h$ is an isomorphism by assumption. So, since $\bar h = \lan_{\sigma_A} h$ and $N_f = \lan_f \sigma_A$, we get that
	\[\lan_f g \cong \lan_{\sigma_A} g \circ \lan_f \sigma_A.\qedhere\]
\end{proof}
\begin{notat}
	We shall denote $\mathsf{Adm}(\bsS)$ the subcategory of admissible arrows.\footnote{It is a class closed under composition, see \cite{walker}.} Given an object $A$, we shall denote by $\mathsf{Adm}_A(\bsS)$ the family of admissible arrows whose domain is $A$.
\end{notat}
A thorough study of admissibility in the 2-category of small categories for the small pre\-she\-a\-ves construction $\bsP$ of \autoref{smallpre} seems to be currently absent from the literature; we collect the main result on the topic in a short appendix at the end of this work (cf. \autoref{char_adm}, \autoref{solsetcond}).

The reader that is not used to the notion of admissibility might find the notion unnatural. Therefore, we dedicate our appendix to providing some properties of $\bsP$\hyp{}admissible functors. Here let us stress that a vast family of functors is admissible for the doctrine of small presheaves. For example, functors between small categories are all $\bsP$-admissible, functors with (small) arity are admissible, and thus accessible functors between accessible categories are all $\bsP$-admissible.
\begin{rmk}[Doctrines and free completions under colimits]
	Doctrines arising as free completions under some class of colimits are `generic' in the following sense. Let $\bsS$ be a doctrine on $\CAT$, whose units $\sigma$ are $\bsP$\hyp{}admissible (if $\bsP$ is the small presheaf construction); then the `nerve' $\lan_\sigma \yon $ is fully faithful,\footnote{As a consequence of the fact that $\sigma$ is `dense', meaning that the left extension of $\sigma$ along itself is the identity 1-cell of $\bsS A$.} and thus $\bsS A$ can be identified with a full subcategory of $\bsP A$, the free completion under small colimits \cite[Thm 16]{power2000representation}. Since being $\bsP$\hyp{}admissible can be seen as a very mild assumption, this observation shows that a vast majority of doctrines can be seen as a free completion under some family of colimits.

\end{rmk}

\begin{defn}[Formal inverse images functors]
	Given an $\bsS$\hyp{}admissible 1-cell $f\colon A\to B$
	we denote by $\bsS^*f$ the nerve of the composite 1-cell $\sigma_Bf$, which exists by the cocompleteness of $\bsS A$.
	\[
		\vcenter{\xymatrix{
				A\ar@{}[dr]|(.3){\Nearrow}\ar[d]_{\sigma_A}\ar[r]^f & \ar[dl]^{N_f} B \ar[d]^{\sigma_B}\\
				\bsS A & \bsS B \ar[l]^{\bsS^*f}
			}}
	\]
\end{defn}
\begin{rmk}
	The previous definition is implicitly observing that if $f$ is admissible, also $\sigma_B \circ f$ is so, because its admissibility reduces to the admissibility of $f$ and the cocompleteness of $\bsS A$.
\end{rmk}
\begin{rmk}[Inverse images are $\bsS$\hyp{}cocontinuous]\label{invimgiscocont}
	When it exists (that is, when $f$ is $\bsS$\hyp{}admissible) $\bsS^* f$ is, by definition, an  $\bsS$\hyp{}cocontinuous 1-cell.
\end{rmk}

\begin{rmk}[The nerve-realisation paradigm] \label{nerve-realisation-para}
	Notice that a $\bsS$\hyp{}admissible arrow $f: A \to B$ induces an adjunction \[\bsS_! f : \bsS A \leftrightarrows \bsS B : \bsS^* f.\]
	This follows directly from our definitions and \cite[Lemma 12]{walker}.
\end{rmk}
\begin{rmk}[Again on admissible $1$-cells for $\bsP$ on $\Cat$]
	We can use this last definition to have a better intuition on $\bsP$\hyp{}admissibles on  $\Cat$. Indeed by the Yoneda Lemma and the characterisation in \autoref{char_adm} it is easy to show that \[\bsP^* f \cong (-) \circ f.\] Thus, a $1$-cell is admissible if and only if for every small presheaf $P \in \bsP B$ the composition $P \circ f$ is still small.
\end{rmk}

\subsection{KZ {\yc}s}

In this subsection, we shall consider the following structure: a pair of \kzs $\bsP$ and $\bsS$ on a 2-category $\cK$, whose objects we think of as locally small categories of some kind, having respective unit components
\[
	\yon_A : A\to \bsP A \qquad \sigma_A : A \to \bsS A.
\]
As already said, the arrow $\yon_A : A\to \bsP A$ is intended as an abstraction of the Yoneda embedding of \autoref{smallpre}. On the other hand, we think $\bsS A$ as an abstraction of the $\bsInd$-completion of \autoref{bsind}; see \autoref{paradigmatico}.
\begin{defn}[KZ Context]\label{def_context}
	Let $\cK$ be a $2$-category. A \emph{\yc} on $\cK$ is a triple $(\bsP,\bsS,\upsilon)$ where:
	\begin{enumtag}{cx}
		\item \label{cx_1} the functors $\bsP$ and $\bsS$ are doctrines on $\cK$ whose unit components $\yon_A : A \to \bsP A$ and $\sigma_A : A \to \bsS A$ are fully faithful in the sense of \autoref{fufai}.
		\item \label{cx_2} For all objects $A$ the object $\bsP A$ is $\bsS$-cocomplete in the sense of \autoref{cocompl_psalg} and for all arrows $f: A \to B$ the arrow $\bsP_!(f) = \lan_{\yon_A}(\yon_B \circ f)$ is $\bsS$\hyp{}cocontinuous in the sense of \autoref{cocont_ar}.
		\item \label{cx_3} The arrow $\upsilon_A : \lan_{\sigma_A}(\yon_A) : \bsS A \to \bsP A$, whose existence is ensured by the assumption that $\bsP A$ is $\bsS$-cocomplete, is representably fully faithful in the sense of \autoref{fufai}.
	\end{enumtag}
\end{defn}
When there is no ambiguity of sort, we refer to a \yc just with the letter $\upsilon$: this uniquely determines the pair $(\bsP,\bsS)$ as codomain-domain of $\upsilon$.
\begin{rmk}
	Notice that from \ref{cx_3} it follows that $\upsilon_A$ is the $A$-component of a transformation $\upsilon : \bsS \To \bsP$, which is (pseudo)natural by the universal property of left extensions and \ref{cx_2}.
\end{rmk}
\begin{eg}\label{paradigmatico}
	The paradigmatic example of such a situation will be for us the couple $(\bsInd_\lambda, \bsP)$ on $\CAT$, where $\bsInd_\lambda$ is the doctrine of \autoref{bsind} and $\bsP$ is the small presheaves construction of \autoref{smallpre}.

	Concretely, using the identification between the Ind-completion $\bsInd_\lambda(A)$ of $A$ and the category $\Cat_{\lambda,\flat}(A^\op,\Set)$ of $\lambda$-flat small functors $A^\op\to\Set$, we obtain a description of $\upsilon_A$ given by the inclusion
	\[
		\upsilon_A: \Cat_{\lambda,\flat}(A^\op,\Set) \hookrightarrow \bsP A.
	\]
\end{eg}

\begin{eg}
	Similarly to the previous example, also the couple $(\bsD_\lambda, \bsP)$ where $\bsD_\lambda$ was defined in \autoref{finitecolimits}, is a context.
\end{eg}

\begin{rmk}[In a context the units of $\bsS$ are $\bsP$\hyp{}admissible]
	Axiom \ref{cx_2} of the definition of context $\sigma$ implies on the spot that the unit $\sigma_A: A \to \bsS A$ is $\bsP$\hyp{}admissible.
\end{rmk}
\begin{rmk}[A base change for admissible arrows]
	Given a context $(\bsS, \bsP)$, all the $\bsS$\hyp{}admissible arrows are $\bsP$\hyp{}admissible, i.e. $\mathsf{Adm}(\bsS) \hookrightarrow \mathsf{Adm}(\bsP)$. \autoref{lotsofadmissibles} will characterise the image of this inclusion, under the assumption that $\upsilon_A$ has a left adjoint.
\end{rmk}
The existence of the embedding above deserves a proof.
	\begin{proof}
	In the following diagram the composition $\upsilon \circ N_{(\bsS,f)}$ provides the $\bsP$-nerve of the 1-cell $f$.
	\[
		\vcenter{\xymatrix@R=1.1cm@C=1.1cm{
		A\ar@{}[dr]|(.3){\Nearrow} \ar[r]^f \ar[d]_{\sigma_A} & B\ar[dl]^{N_{(\bsS,f)}} \ar@{.>}[d]^{N_{(\bsP,f)}}\\
		\bsS A \ar[r]_{\upsilon_A} & \bsP A
		}}
	\]
	When we unpack the definitions, the last observation can be reduced to the `Kancellation formula' of \autoref{canc_rule}.
	\[N_{(\bsP, f)} = \lan_f \yon_A \stackrel{\ref{canc_rule}}{\cong} \lan_{\sigma_A} \yon_A \circ \lan_f \sigma_A = \upsilon_A \circ N_{(\bsS, f)} .\qedhere \]
\end{proof}
\begin{rmk}
	As a result of the previous observation, the class of $\bsS$ admissible arrows is, a priori, smaller then the class of $\bsP$\hyp{}admissible arrows, and we would be very much interested in understanding which $\bsP$\hyp{}admissible arrows happen to be $\bsS$\hyp{}admissible. We will come back to this question in the next section, where the existence of what we call \emph{envelopes} will help us to find some of such arrows.
\end{rmk}
\begin{eg}
	In $\CAT$, virtually every arrow is $\bsP$\hyp{}admissible, while much fewer arrows are $\bsInd$\hyp{}admissible.
\end{eg}
At least locally, we can characterise those objects where admissible arrows do not change under base change of doctrine. This proposition will be useful later.

\begin{prop} \label{lotsofadmissibles}
	Let $A$ be an object and assume that the component $\upsilon_A : \bsS A \to \bsP A$ has a left adjoint $l : \bsP A \to \bsS A$. Then the natural inclusion
	\[\xymatrix{\mathsf{Adm}_A(\bsS) \ar[r] & \mathsf{Adm}^{\upsilon l}_A(\bsP)}\]
	yields a bijection between $\bsS$\hyp{}admissible arrows and $\bsP$\hyp{}admissible arrows  whose $\bsP$-nerve is fixed by the monad $\upsilon_Al$.
\end{prop}
\begin{proof}
It is entirely clear that the inclusion lands in those $\bsP$\hyp{}admissible arrows whose $\bsP$-nerve is fixed by the monad $\upsilon_Al$, because for a $\bsS$\hyp{}admissible arrow, its $\bsP$-nerve coincides with $\upsilon N_{(\bsS,f)}$ and $l\upsilon \cong 1$. Now, when $\upsilon_A$ has a left adjoint $l_A$, by abstact nonsense, this must coincide with $N_{(\bsS, \yon_A)}$.  With this observation in mind, we can construct a candidate nerve by composition with $l_A$.
		      \[
			      \vcenter{\xymatrix@R=1.1cm@C=1.1cm{
			      A\ar@{}[dr]|(.3){\Nearrow} \ar[r]^f \ar[d]_{\sigma_A} & B\ar@{.>}[dl]^{N_{(\bsS,f)}} \ar[d]^{N_{(\bsP,f)}}\\
			      \bsS A  & \ar[l]^{l_A} \bsP A
			      }}
		      \]
The assumption that the $\bsP$-nerve is fixed by the monad $\upsilon_Al$ is precisely what ensures this candidate nerve to verify the second part of the \autoref{defadm}.
\end{proof}

We now introduce a notion of smallness for an object based on admissibility: the property of being \emph{petit} (evidently, French for `small') with respect to a dctrine $\bsS$. After the definition, we will describe $\bsP$-petit objects in $\CAT$.

\begin{defn}[Petit objet]\label{petit_obj}
	Let $(\bsS,\sigma)$ be a doctrine. We call an object $A$ of $\cK$ $\bsS$-\emph{petit} if every arrow $f :A \to X$ is $\bsS$\hyp{}admissible in the sense of \autoref{defadm}.
\end{defn}
\begin{eg}
	It is well known that small categories are $\bsP$-petit in the context described in \autoref{smallpre}.
\end{eg}

\subsection{Accessibily and Presentability}\label{sec_acc_pres}
The fundamental idea behind the definition of a locally presentable and accessible category is abstracting the property of being determined by a small set of information (the property of being `bounded' by a regular cardinal $\lambda$) into a clean categorical request.

We noe reached the heart of the present paper, where we provide a convincing definition of $\lambda$-accessible and locally $\lambda$-presentable in a general $2$-category equipped with a context $(\bsS, \bsP$). There are several equivalent characterisations of both accessibility and presentability in the literature; thus, we should discuss which one of these characterisations we are abstracting.
\begin{itemize}
	\item A category $A$ is $\lambda$-accessible if and only if it is equivalent to a category $\bsInd_\lambda(G)$ for some small $G$ (cf. \cite[2.26]{Adamek1994}).
	\item A category $K$ is (locally) $\lambda$-presentable if it is a $\lambda$-accessible reflective full subcategory of a presheaf category over a small category $G$, i.e. if there is a reflection of the form $K \leftrightarrows \bsP G$ where the right adjoint $K\to \bsP G$ is a $\lambda$-accessible functor (cf. \cite[1.46]{Adamek1994}).
\end{itemize}
This paves the way to the following two definitions.
\begin{defn}[$\upsilon$-accessible object]\label{yonacc}
	Given a \yc $\upsilon$ on the 2-category $\cK$, an object $A\in\cK$ is $\upsilon$-\emph{accessible} if there exists a $\bsP$-petit object $G\in \cK$ such that $A \simeq \bsS G$.
\end{defn}
\begin{defn}[$\upsilon$-presentable object]\label{yonpres}
	Let $\upsilon$ be a \yc; an object $A\in\cK$ is $\upsilon$-\emph{presentable} if it is an $\upsilon$-accessible reflective full subobject of some $\bsP G$, for some $\bsP$-petit object $G$, and such that the inclusion $A \hk \bsP G$ is $\bsS$\hyp{}cocontinuous.
\end{defn}
It is evident how the nomenclature introduced so far has been engineered to state analogues of these results in an abstract 2-category endowed with a context. Only one small detail is left open.
\begin{rmk}[Why `locally', and why not?]
	The historically correct name for presentable categories contains the adjective `locally' because those categories are \emph{locally} specified by \emph{presentable} objects. Although, more recently, some authors prefer to drop the adjective \emph{locally}, hinting at the idea that a presentable category is a cocomplete category specified by a small amount of data. We choose this second option: thus, for us a `presentable category' is a $\upsilon$-presentable 0-cell of the 2-category $\Cat$, with respect to the context $\upsilon : \bsInd_\lambda \To \bsP$.
\end{rmk}

For the sake of clarity, let's unwind our definition \autoref{yonpres} above; an object $A$ is $\upsilon$-presentable if the following conditions are met:
\begin{enumtag}{p}
	\item \label{p:uno} there exists an adjunction $L : \bsP G \leftrightarrows A : i$, where $i$ is representably fully faithful and $G$ is a $\bsP$-petit object;
	\item \label{p:due} $A$ is a $\upsilon$-accessible object in the sense of \autoref{yonacc};
	\item \label{p:tre} $i$ is an $\bsS$\hyp{}cocontinuous functor in the sense of \autoref{cocont_ar}.
\end{enumtag}
In the following, we freely refer as properties \ref{p:uno}--\ref{p:tre} as part of the definition of $\upsilon$-presentability.
\subsection{Representation theorem}
Now that we have given the definition of accessibilty and presentability, we should provide evidence that this notion has something to do with the classical one. We choose to reproduce one of the most representative results in the theory of locally presentable categories, which we state below.
\begin{thm}[Representation theorem]
	Let $A$ be a locally small categories. Then the following conditions are equivalent
	\begin{enumtag}{cp}
		\item $A$ is cocomplete and $\lambda$-accessible;
		\item $A$ is $\lambda$-presentable.
	\end{enumtag}
\end{thm}
Of course, we have higher ambitions than just reproducing this theorem; but a more faithful simulation of the classical theory will require additional assumptions: in the next section, we will show how to recover Gabriel-Ulmer duality in a 2-category endowed with a context.
\begin{thm}[Formal representation theorem]\label{the-main}
	Let $\upsilon$ be a \yc. Then the following conditions are equivalent for an object $A\in\cK$:
	\begin{enumtag}{rt}
		\item $A$ is $\bsP$-cocomplete and $\upsilon$-accessible;
		\item $A$ is $\upsilon$-presentable.
	\end{enumtag}
\end{thm}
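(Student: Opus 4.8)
The plan is to prove the two implications separately. In both directions the accessibility half is free: clause \ref{p:due} of \autoref{yonpres} is literally ``$A$ is $\yon$-accessible'', so it transfers verbatim between the two statements, and the real content is the interaction between $\bsP$-cocompleteness and the reflective embedding of clauses \ref{p:uno} and \ref{p:tre}. Throughout I would lean on three facts already available: that presheaf objects $\bsP G$ are $\bsP$-cocomplete; that Yoneda extensions are pointwise and hence are preserved by left adjoints; and the identification, coming from axiom \ref{ysax:uno} applied to $f=y^{\bsS}_G$, that $\yon_G\cong \bsS G(y^{\bsS}_G,1)=\Lan_{y^{\bsS}_G}(y^{\bsP}_G)=\Yan_G(y^{\bsP}_G)$, which is exactly the context normalization $\yon_X\cong\Yan_X(y^{\bsP}_X)$.

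For $(2)\Rightarrow(1)$ I only need cocompleteness, since accessibility is \ref{p:due}. By \ref{p:uno} there is a reflector $r'\dashv i'$ with $i'\colon A\to \bsP G$ fully faithful and $r'i'\cong \id_A$. Given any $\bsP$-small $\ell\colon H\to A$, I would form $i'\ell\colon H\to\bsP G$ and use $\bsP$-cocompleteness of the presheaf object $\bsP G$ to produce the pointwise extension $\Yan_H(i'\ell)=\Lan_{y^{\bsP}_H}(i'\ell)$. Since $r'$ is a left adjoint it is cocontinuous, so it passes through the extension: $r'\circ\Yan_H(i'\ell)\cong \Lan_{y^{\bsP}_H}(r'i'\ell)\cong \Lan_{y^{\bsP}_H}(\ell)=\Yan_H(\ell)$, and this composite is still pointwise. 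Thus every Yoneda extension into $A$ exists and is pointwise, which is precisely $\bsP$-cocompleteness in the sense of \autoref{cocompleteness}.

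For $(1)\Rightarrow(2)$, write $A\cong\bsS G$ with $G$ being $\bsP$-small, and recall $A$ is $\bsP$-cocomplete; I must supply clauses \ref{p:uno}, \ref{p:due}, \ref{p:tre} with ambient presheaf object $\bsP G$. Clause \ref{p:due} is the hypothesis. For the embedding I would take $i:=\yon_G\colon \bsS G\to\bsP G$, fully faithful by \ref{ycx:due} (equivalently because $y^{\bsS}_G$ is $\bsP$-dense), and the candidate reflector $r:=\Yan_G(y^{\bsS}_G)$, which exists and is pointwise exactly because $A=\bsS G$ is $\bsP$-cocomplete. The decisive point is that $r$ and $i$ form the realization--nerve adjunction of the Yoneda structure $\bsP$: the nerve $\bsS G(y^{\bsS}_G,1)$ is right adjoint to the realization $\Yan_G(y^{\bsS}_G)$ whenever the target is $\bsP$-cocomplete (cf. \cite{street1978yoneda}), and by \ref{ysax:uno} this nerve is nothing but $\yon_G=i$. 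Hence $r\dashv i$, and since $i$ is fully faithful this is a reflection, giving \ref{p:uno}. Finally, \ref{p:tre} — that $i$ is accessibly embedded — is the statement that $i$ is an $\bsS$-cell; by \autoref{caratterizzazioneessecelle} this reduces to checking $\Yan_G(i\,y^{\bsS}_G)\cong i$, and indeed $i\,y^{\bsS}_G=\yon_G\,y^{\bsS}_G\cong y^{\bsP}_G$, so $\Yan_G(i\,y^{\bsS}_G)\cong\Yan_G(y^{\bsP}_G)\cong\yon_G=i$ by the very definition of a context.

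The main obstacle is the reflectivity step in $(1)\Rightarrow(2)$: justifying the realization--nerve adjunction $\Yan_G(y^{\bsS}_G)\dashv \bsS G(y^{\bsS}_G,1)$ from $\bsP$-cocompleteness and matching its right adjoint with $\yon_G$. Should one prefer to avoid invoking that adjunction as a black box, the alternative is to build the unit $\id_{\bsP G}\To ir$ and counit $ri\To\id_{\bsS G}$ by hand, using that $\id_{\bsP G}$ and $\id_{\bsS G}$ are the left extensions $\Lan_{y^{\bsP}_G}(y^{\bsP}_G)$ and $\Lan_{y^{\bsS}_G}(y^{\bsS}_G)$ (axiom \ref{ysax:due} on each side), whiskering the structural $2$-cells of $r$ and $i$, and verifying the triangle identities by restricting along the dense map $y^{\bsP}_G$ via \autoref{pasting}; full faithfulness of $\yon_G$ is exactly what promotes the resulting unit and counit to a genuine reflection. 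Everything else is routine bookkeeping with the universal properties of left extensions.
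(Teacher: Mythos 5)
Your proposal is correct and follows essentially the same route as the paper: accessibility transfers via \ref{p:due}, cocompleteness of a presentable object comes from its being a reflective subobject of the ($\bsP$-cocomplete) presheaf object, and conversely the reflection $\Yan_G(y^{\bsS}_G)\dashv\yon_G$ together with the context axiom $\yon_G\cong\Lan_{y^{\bsS}_G}(y^{\bsP}_G)$ supplies \ref{p:uno} and \ref{p:tre}. You merely spell out the realization--nerve adjunction and the preservation of extensions by the reflector, which the paper leaves implicit.
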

\begin{proof}
	Assume that $A\in\cK$ is presentable; the fact that $A$ is accessible is the content of \ref{p:due}. To show that $A$ is cocomplete, we can observe that reflective subobjects of $\bsP$-cocomplete objects remain $\bsP$-cocomplete. This proves the first implication.

	Now assume that $A$ is $\upsilon$-accessible and $\bsP$-cocomplete; we shall show that the axioms \ref{p:uno}--\ref{p:tre} hold. By definition of $\upsilon$-accessibility, $A$ is of the form $\bsS(G)$, for some $G$ which is $\bsP$-petit, so we can draw the following diagram filled by an isomorphism.
	\[
		\vcenter{\xymatrix{
				& G  \ar[dr]^{\sigma_G}\ar[dl]_{\yon_G}& \\
				\bsP G &\utwocell<\omit>{}& \ar[ll]^{\upsilon_G} \bsS  G \simeq A
			}}
	\]
	Also, by definition of context, $\upsilon_G$ is $\bsS$\hyp{}cocontinuous and representably fully faithful. Since $A$ is $\bsP$-cocomplete, $\upsilon_G$ has a left adjoint, by \cite[Lemma 12]{walker}. This shows \ref{p:due} and \ref{p:tre}.
\end{proof}

\section{Gabriel Ulmer Duality}\label{sec:gabriel}
The present section in structured in four subsections.
\begin{itemize}
	\item In \autoref{guenvelope} we introduce the notion of \emph{Gabriel-Ulmer envelope}, together with the notion of context; this is a key ingredient of our theory and will help us in building a formal version of Gabriel-Ulmer duality.
	\item In \ref{morita_in_ctx} we discuss the notion of Morita equivalent objects for a doctrine. These are objects $A,B$ such that $\bsS A \simeq \bsS B$.
	\item We use this technology to provide a sharper characterisation of $\upsilon$-presentable objects, in \autoref{production}.
	\item We provide a context-relative version of the Gabriel-Ulmer duality in \autoref{guduality}.
\end{itemize}

\subsection{Envelopes}
Given a category $A$, denote with $\bsD A$ the subcategory of $[A^\op,\Set]$ spanned by finite colimits of representables. As mentioned before, in \autoref{finitecolimits}, this is the free completion of $A$ under finite colimits. Clearly $(\bsD A)^\op$ is the free completion of $A^\op$ under finite \emph{limits}. This means that \[\mathsf{Lex}((\bsD A)^\op,\Set)\simeq [A^\op,\Set],\] where $\mathsf{Lex}(A,B)$ is the usual notation for the category of functors preserving finite limits, borrowed for example from \cite{makkai1987some}.

When a category has finite colimits, then we have a simplified description of its $\bsInd$\hyp{}completion, which is given by the fact that flat functors are precisely functors preserving finite limits (cf. \cite[6.3.2]{Bor1}). In other words, \emph{there exists an object $\bsD A$ such that} \[\bsInd(\bsD A) \simeq \bsP A.\]

In trying to abstract the above equation to the setting of a generic 2-category $\cK$ equipped with \kzs $\bsP,\bsS,\bsD$, this amounts to a factorisation of $\yon_A : A \to \bsP A$ as a composition $A\to \bsD A \to \bsS (\bsD A)\simeq \bsP A$, naturally in $A\in\cK$. This will turn out to be a fundamental property, and motivates us to define what follows.
\begin{defn}[\gu envelope]\label{guenvelope}
	A (Gabriel\hyp{}Ulmer) \emph{envelope}, shortly a \emph{\gu envelope} or just an \emph{envelope}, relative to a context $\upsilon : \bsS\To \bsP$ consists of an additional \kz{} $\bsD$ with fully
	faithful unit $\delta_A : A \to \bsD A$ such that:
	\begin{enumtag}{g}
		\item \label{g:1} The pair $(\bsP, \bsD)$ is also a \yc in the sense of \autoref{def_context}.
		\item \label{g:2} For all objects $A\in\cK$, the object $\bsS(\bsD A)$ is $\bsP$-cocomplete and the arrow $\bsP A \to \bsS(\bsD A)$ induced from the composite $A \to \bsD A \to \bsS(\bsD A)$ by left extension along $\yon_A$ is an equivalence.
	\end{enumtag}
\end{defn}
Condition \ref{g:2} deserves t be spelled out more explicitly. It requires that the diagram
\[\label{theta_for_gu2}
	\vcenter{\xymatrix{
	&A\ar[rr]^{\yon_A} \drtwocell<\omit>{\theta}\ar[dl]_{\delta_A}&&\bsP A\ar@{.>}[dl]^{\iota_A}\\
	\bsD A\ar[rr]_{ \sigma_{\bsD A}} && \bsS \bsD A &
	}}
\]
is filled by the dotted 1-cell $\iota_A : \bsP A \to \bsS\bsD A$ and by an invertible 2-cell $\theta : \iota_A \cdot\yon_A \cong \sigma_{\bsD A}\circ\delta_A$: the assumption that $\bsS\bsD A$ is $\bsP$-cocomplete ensures the existence of the left extension $\iota_A$ of $\sigma_{\bsD A}\circ\delta_A$ along $\yon_A$. The 2-cell filling the diagram must be invertible, because $\yon_A$ is fully faithful (cf. \ref{cx_1} of \autoref{def_context}), and \ref{g:2} requires that the 1-cell $\iota_A$ is invertible.
\begin{eg}
	The guiding example in giving this definition is the 2-category $\CAT$, which has a \gu envelope, relative to the standard context $\bsInd_\omega\to [(\firstblank)^\op,\Set]$, defined sending $A$ into its finite colimit completion $\bsD A$.
\end{eg}
\subsection{Morita theory in context}\label{morita_in_ctx}
To proceed with our analysis, we need to introduce the notion of \emph{Cauchy completeness} for an object $A\in\cK$: classical references about the Cauchy completion of categories are \cite{LawvereFW:metsgl,CTGDC_1983__24_4_377_0,CTGDC_1986__27_2_133_0}; it will appear evident to the reader that our intent here is to provide a formal category-theoretic account of Cauchy completeness.
\begin{defn}\label{caucau}
	Let $\bsS$ be a \kz on $\cK$. We say that $A$ is $\bsS$-\emph{Cauchy complete} if and only if any adjunction $q^* :  \bsS B \leftrightarrows \bsS A : q_* $ in which $q_*$ is $\bsS$\hyp{}cocontinuous \emph{converges}, in the sense that there exists some, necessarily $\bsS$-admissible, $f : B\to A$ such that $q^* =\bsS_! f$.
	\[
		\vcenter{\xymatrix@R=1.1cm@C=1.1cm{
		A\ar[d]_{\sigma_A} & \ar@{.>}[l]_f B \ar[d]^{\sigma_B}\\
		\bsS A \ar@<-.5em>[r]_{q_*}\ar@{}[r]|\perp & \ar@<-.5em>[l]_{q^* \cong \bsS_! f}\bsS B
		}}
	\]
\end{defn}

\begin{rmk}[$\bsP$-Cauchy complete objects in $\Cat$]
	The terminology is motivated by the fact that in $\cK=\Cat$, a category is Cauchy complete (in the sense that all idempotents split, or equivalently that $A$ coincides with the subcategory of tiny objects \cite{CTGDC_1986__27_2_133_0} in $\bsP A$) if and only if it is $\bsP$-Cauchy complete in the sense of \autoref{caucau}, where $\bsP$ is the small presheaves construction.
\end{rmk}
\begin{proof}
	If $A$ is Cauchy complete, then an adjunction $p\dashv q : B \xto{p} A$, where $q$ is $\bsP$\hyp{}cocontinuous is such that $p$ preserves tiny objects, thus inducing a restricted functor $p|_B$ in
	\[
		\vcenter{\xymatrix{
		B \ar[d]\ar[dr]^f\ar@/_2pc/[dd]_{\yon_B}\ar@{.>}[r]^{p|_B}& \bar A\ar[d]^\wr \\
		\bar B \ar[d] & A\ar[d]^{\yon_A}\ar[dl]_{N_f}\\
		\bsP B \ar[r]_p & \bsP A
		}}
	\]
	that must induce the adjunction $p\dashv q$ as $\lan_{\yon_B}(\yon_A f)\dashv \lan_{\yon_A f}\yon_B$; $q$ is cocontinuous because it has the right adjoint $\lan_{N_f}\yon_A$, where $N_f=\lan_f\yon_B$.

	On the other hand, if $A$ is $\bsP$-Cauchy complete in the sense of \autoref{caucau}, consider the embedding $u : A\to \bar A$ of $A$ in its Cauchy completion; this must induce an equivalence $\bsP A\simeq \bsP \bar A$, so in particular, there must exist a $\bar A \to A$ left adjoint to $u$; thus, $A$ is Cauchy complete.

	This last point deserves more explanation: Cauchy completion can be seen as absolute cocompletion (i.e. completion under absolute colimits, cf. \cite{Kelly2005}). Free cocompletions are KZ monads, as shown by Kock in \cite{kock1995monads}. Since the property of being cocomplete can be rephrased as `being a pseudo algebra for the associated KZ monad', and such structure only depends on the existence of a left adjoint for the unit, we conclude that $A$ is Cauchy complete.
\end{proof}
\begin{rmk} One way to synthesize the proof above in a way that is more digestible to the reader is the following:
	\begin{quote}
		Given any adjunction $q^* \dashv q_*$ where the right adjoint preserves all colimits, the left adjoint must send tiny objects to tiny objects.
	\end{quote}
Thus, if tiny objects coincide with the image of the Yoneda embedding, one has that $A$ is $\bsP$-Cauchy complete.
Yet, this condition is identical to the request that $A$ is Cauchy complete in the classical sense.
\end{rmk}

\begin{rmk}[$\bsInd_\lambda$-Cauchy complete in $\Cat$] \label{ccnoncambia} Given an adjunction $q^* \dashv q_*$ in $\Cat$ where the right adjoint preserve $\lambda$-directed colimits, the left adjoint will always map $\lambda$-presentable objects to $\lambda$-presentable objects. Thus, following the previous discussion, a category $A$ will be $\bsInd_\lambda$-Cauchy complete if the $\lambda$-presentable objects in $\bsInd_\lambda(A)$ coincide with the image of the Yoneda embedding. It is easy to show that an object is $\lambda$-presentable in $\bsInd_\lambda(A)$ if and only if it is a retract of a representable object, the proof is identical to the case of the presheaf construction and thus $\bsInd_\lambda$-Cauchy complete are again Cauchy complete categories in the classical sense.
\end{rmk}
\begin{defn}
	We define the following locally full sub-2-categories of:
	\begin{enumtag}{k}
		\item\label{k_1} $\cK_{\bsS,!}$, the $\bsS$\hyp{}cocontinuous arrows (cf. \autoref{defadm}),
		\item\label{k_2} $\cK_{\bsS,\mathrm{a}}$, the $\bsS$-admissible  (cf. \autoref{cocont_ar}), and
		\item\label{k_3} $\cK_{\bsS,\ell}$, those arrows that have $\bsS$\hyp{}cocontinuous right adjoints.
	\end{enumtag}
	Of course, $\cK_{\bsS,\ell} \subseteq \cK_{\bsS,!}$. We allow intersection of sorts whenever necessary, for example $f\in\cK_{\bsS,\mathrm{a!}}$ is a cocontinuous, admissible arrow.
\end{defn}
Then the following result follows directly from the fact that $\bsS$ is a doctrine with fully faithful unit:
\begin{lem}
	The (corestricted) 2-functor $\bsS_! : \cK \to \cK_{\bsS,!}$ is locally fully faithful, so an object $A$ is $\bsS$-Cauchy complete if and only if the functor
	\[\bsS_! : \cK_{\bsS,\mathrm{a}}(B,A) \to \cK_{\bsS,\ell}(\bsS B,\bsS A)\]
	is an equivalence for all objects $B \in \cK$.
\end{lem}
\begin{proof}
	By \autoref{nerve-realisation-para} and \autoref{invimgiscocont}, the corestricted functor $\bsS_!$ is well defined (one can routinely check that the action on $2$-cells is induced by the universal property of the left extension and that it is contravariant). Consider now a diagram as in \autoref{caucau}, since $\sigma_A$ and $\sigma_B$ are fully faithful, the functor is faithful, and since they are dense the functor is full. \autoref{caucau} is saying precisely that the functor in question is also essentially surjective; this concludes the proof.
\end{proof}
The notion of Cauchy-completeness yields naturally to an abstraction of the relation of Morita equivalence in a context. If $\upsilon : \bsS \To \bsP$ is such a context, we can prove the following results.
\begin{cor}[$\bsP$-Morita equivalence] \label{moritaeq}
	If $A$ and $B$ are $\bsP$-Cauchy complete then for any equivalence $u : \bsS A \simeq \bsS B$ there exists an equivalence $v: A \simeq B$ such that $\bsS_! v \cong u$.
\end{cor}
\begin{proof}
 It follows at once by the previous Lemma.
\end{proof}
\begin{cor} \label{moritaleftadj}
	Suppose that $A$ is $\bsS$-Cauchy complete and that $u: B \to A$ is an arrow with the property that $\bsS_! u: \bsS A \to \bsS B$ admits a left adjoint then $u$ itself admits a left adjoint.
\end{cor}
\begin{proof}
	The arrow $\bsS_! : \bsS B \to \bsS A$ is $\bsS$\hyp{}cocontinuous, since it is induced by the universal property of $\bsP B$. By assumption it has a left adjoint $l : \bsS A \to \bsS B$, so we may apply the $\bsP$-Cauchy completeness of $B$ we obtain an arrow $v : A \to B$ with $l \cong \bsS_! v$ and hence $\bsS_! v\dashv \bsS_! u$. The 2-functor $\bsS_! : \cK_{\bsS} \rightarrow \cK_{\bsS,!}$ is locally fully faithful so the unit and counit of the adjunction $\bsS_! v \dashv \bsS_! u$ lies in its image and thus may be lifted along it to give unit and counit for an adjunction $v\dashv u$ in $\cK_{\bsS}$.
\end{proof}
\subsection{On the consequences of having an envelope}
	The next remark is another step in our simulation process of the classical theory of (locally) $\lambda$-presentable categories. The result is quite well-known and follows from a combination of theorems in \cite{Adamek1994}.
	\begin{rmk}
		The following are equivalent for a category $G$:
	\begin{enumtag}{l}
		\item\label{lam_1} $G$ has finite colimits;
		\item\label{lam_2} $\bsInd \, G$ is locally finitely presentable.
		\item\label{lam_3} $\bsInd \, G$ is cocomplete.
	\end{enumtag}
	When a context $\upsilon$ admits an envelope $(\bsD,\delta)$, we manage to recover this result.
\end{rmk}
\begin{prop} \label{production}
	Let $G\in\cK$ be an $\bsS$-Cauchy complete object, and $\upsilon : \bsS \To \bsP$ a \yc  for which there exists an envelope $\bsD$ as in \autoref{guenvelope}; then the following conditions are equivalent.
	\begin{enumtag}{pc}
		\item \label{pc:uno} $G$ is $\bsD$-cocomplete (cf. \autoref{cocomplete_obj});
		\item \label{pc:due} $\bsS G$ is $\bsP$-cocomplete;
		\item \label{pc:tre} $\bsS G$ is $\upsilon$-presentable (cf. \autoref{yonpres}).
	\end{enumtag}
\end{prop}
\begin{proof}
	The fact that \ref{pc:due} is equivalent to \ref{pc:tre} is evident thanks to \autoref{the-main}; to show that \ref{pc:uno} implies \ref{pc:due}, note that $G$ is $\bsD$-cocomplete if and only if it is a reflective subobject of $\bsD G$; given this, we have an adjunction
	\[
		\xymatrix{G \ar@{}[r]|\perp \ar@{^{(}->}@<-4pt>[r]_{i_G}& \bsD G. \ar@<-4pt>[l]_{l_G}}
	\]
	If we apply $\bsS$ to this adjunction and use the (adjoint) equivalence $\iota_G$ of \eqref{theta_for_gu2} we get an adjunction $\bsS G \leftrightarrows \bsP G$ back, from which we derive that $\bsS G$ is a reflective subobject of $\bsP G$, and thus it is $\bsP$-cocomplete.

	To conclude, we prove the converse, i.e. that \ref{pc:due} implies \ref{pc:uno}: the proof of this last implication will occupy the next page.

	Suppose that $G$ is $\bsS$-Cauchy complete and that $\bsS G$ is $\bsP$-cocomplete: then $G$ is $\bsD$-cocomplete.

	Now, consider the pasting diagram obtained from the following square and triangle
	\[\vcenter{\xymatrix{
		& G \ar[r]^{\delta_G}\ar[d]_{\yon_G}\ar@/_1pc/[dl]_{\sigma_G}& \bsD G\ar[d]^{\sigma_{\bsD G}} \\
		\bsS G \ar[r]_{\upsilon_G} & \ultwocell<\omit>{c}\bsP G\ar[r]_{\iota_G} & \bsS\bsD G\ultwocell<\omit>{\theta_G}
		}}\]
	where both 2-cells are invertible($\theta$ is as in \autoref{theta_for_gu2}, so $\iota_G$ is invertible).

	It is easy to observe that this pasting exhibits a left extension, as it results from pasting two left extensions, and that the 2-cell $\theta_G\cdot(\iota_G * c_{\yon_G})$ remains invertible. On the other hand, the same triangle exhibits $\bsS_!\delta_G$, as in \autoref{lr_shriek} and \autoref{def_context}.\ref{cx_2}; thus, we obtain an invertible 2-cell $\bsS_!\delta_G\cong \iota_G\cdot \upsilon_G$, and from this we derive
	\[i\cdot\sigma_G = \iota_G^{-1} \cdot \bsS_!\delta_G\cdot\sigma_G\cong \upsilon_G\cdot\sigma_G\cong\yon_G.\]
	Furthermore, under the assumption that $\bsS G$ is $\bsP$-cocomplete the universal
	property of $\bsP G$ may be applied to induce a $\bsP$\hyp{}cocontinuous left extension $l : \bsP G \to \bsS G$ of $\sigma_G$ along $\yon_G$, displayed by an isomorphism $\beta : \sigma_G \cong l\circ\yon_G$ filling the diagram
	\[
		\vcenter{
		\xymatrix{
		G\ar[d]_{\yon_G}\ar[r]^{\sigma_G}\drtwocell<\omit>{\beta} & \bsS G \\
		\bsP G \ar@/_1pc/[ur]_l &
		}}
	\]
	Now, The identity $1_{\bsP G}$ is a left extension of $\yon_G$ along itself, so the composite isomorphism $(i * \beta)\circ\alpha : \yon_G \cong i\circ l\circ \yon_G$ induces a 2-cell $\eta : 1_{\bsP G} \To i\circ l$.

	Similarly, we get a 2-cell $\kappa : 1_{\bsS G} \To l\circ i$, because the identity $1_{\bsS G}$ is a left extension of $\sigma_G$ along itself. The situation is depicted in the following diagrams:
	\[{\footnotesize
		\vcenter{\xymatrix{
		G \rruppertwocell^{\sigma_G}{\beta}\ar[r]\ar[d]_{\sigma_G}\drtwocell<\omit>{\alpha}& \bsP G\ar[r]_l&\bsS G \\
		\bsS G\ar@/_1pc/[ur]_i&&
		}}=
		\vcenter{\xymatrix{
				G\ar[r]^{\sigma_G}\ar[d]_{\sigma_G} & \bsS G\\
				\bsS G\urlowertwocell_{l\cdot i}{\kappa}\ar@{=}[ur]
			}}
		\qquad
		\vcenter{\xymatrix{
		G\rruppertwocell^{\yon_G}{\alpha}\ar[r]\ar[d]_{\yon_G}\drtwocell<\omit>{\beta} & \bsS G \ar[r]_i& \bsP G\\
		\bsP G\ar@/_1pc/[ur]_l&&
		}}=
		\vcenter{\xymatrix{
				G\ar[r]^{\yon_G}\ar[d]_{\yon_G} & \bsP G\\
				\bsP G\urlowertwocell_{i\cdot l}{\eta}\ar@{=}[ur]
			}}}
	\]
	It is also easily seen that the context axioms imply
	that any $\bsP$\hyp{}cocontinuous arrow is $\bsS$\hyp{}cocontinuous; it follows that the arrow $l\circ i$
	is $\bsS$\hyp{}cocontinuous since it is a composite of the $\bsP$\hyp{}cocontinuous arrow l and the
	$\bsS$\hyp{}cocontinuous arrow $l$. So $\kappa$ is a 2-cell between $\bsS$\hyp{}cocontinuous arrows whose whiskering by $\sigma_G$ is an isomorphism, from which it follows that $\kappa$ itself is an isomorphism.

	A straightforward computation, starting with the defining equations for $\eta$ and $\kappa$ shows that $i\circ\kappa\circ \sigma_G = \eta\circ i \circ \sigma_G$.

	Furthermore, the domain $i$ and codomain $i\circ l \circ i\cong i$ of the 2-cells $i\circ \kappa$ and $\eta\circ i$ are both $\bsS$\hyp{}cocontinuous, so it follows from
	the universal property of $\sigma_G$, that they are equals. The dual argument, using the universal property of $\yon_G$, also shows that $\kappa \circ l = l\circ \eta$. Taking $\epsilon := \kappa^{-1}$, these two equations transform to give triangle identities demonstrating that $\eta$ and $\epsilon$ are unit and counit of an adjunction $l\dashv i$ with invertible counit.

	Finally, by canceling the equivalence $\bsS(\bsD G)\simeq \bsP G$ we see that $\lan_{\sigma_G}(\sigma_{\bsD G}\circ\delta_G)$ has a left adjoint $l'$ in the square
	\[
		\vcenter{\xymatrix@C=2cm{
		G\ar[r]^{\delta_G}\ar[d]_{\sigma_G} & \bsD G\ar[d]^{\sigma_{\bsD G}} \\
		\bsS G\ar@<-.5em>[r]_-{\lan_{\sigma_G}(\sigma_{\bsD G}\circ\delta_G)} \ar@{}[r]|\perp & \bsS(\bsD G)\ar@<-.5em>[l]_{l'}
		}}
	\]
	Finally, the assumption that $G$ is $\bsS$-Cauchy complete, we can obtain an adjunction $r \dashv \delta_G$ with invertible counit; this is equivalent to the fact that $G$ is $\bsD$-cocomplete.
\end{proof}
\begin{rmk}[On (right) adjoints to cells of the form $\bsS_! f$]\label{Zf-has-adjoints}
	For every $\bsD$\hyp{}cocontinuous, $\bsP$-admissible morphism $f : G\to G'$ between $\bsD$-cocomplete objects, there exists a 1-cell $\bsS^* f$ in diagram below which is right adjoint adjoint to $\bsS_! f$.
	\[\vcenter{\xymatrix{
		G \ar[r]^f\ar[d]_{\sigma_G} & G' \ar[d]^{\sigma_{G'}} \\
		\bsS G \ar@<.5em>[r]^{\bsS_! f}\ar@{}[r]|\perp & \ar@<.5em>[l]^{\bsS^* f}\bsS G'
		}}\]
	Moreover $\bsS^* f$ exhibits the universal property of $\lan_{ \sigma_{G'}f}( \sigma_G)$. Of course this follows directly from \autoref{lotsofadmissibles} and \autoref{production}, because in this case $\bsS G$ is cocomplete. Let us provide a more transparent description of such right adjoint.
	\[\vcenter{\xymatrix{
		G\ar[dr]^{ \sigma_G}\ar[rrr]^f \ar[dd]_{\yon_G} &&& G' \ar[dd]^{\yon_{G'}}\ar[dl]_{ \sigma_{G'}}\\
		& \bsS G\ar[dl]^{\upsilon_G} \ar@<4pt>[r]^{\bsS_! f}&\ar@{.>}@<4pt>[l]^{\bsS^* f} \bsS G' \ar[dr]_{\upsilon_{G'}} & \\
		\bsP G &&&\ar[lll]^{\bsP^* f} \bsP G'
		}}
	\]
	Call $L: \bsP G \to \bsS G$ the reflection of $\bsS G$, which exists because $\bsS G$ is $\upsilon$-presentable (this is implied by the fact that $G$ is $\bsD$-cocomplete and by \autoref{production}).  Then, writing explicitely the content of 2.31, we obtain that
	\[\label{the-claim}
		\bsS^* f \cong L \circ \bsP^* f \circ \upsilon_{G'}.
	\]
\end{rmk}
\subsection{Gabriel-Ulmer Duality in context}\label{gu_in_ctx}
Gabriel\hyp{}Ulmer duality as exposed in \cite{makkai1987some} builds a bi\hyp{}equivalence
\[
	\textsf{Mod} : \Lex^\op \leftrightarrows \LFP : \textsf{Th}
\]
between the 2-category $\Lex$ of small categories with finite limits, finite limit preserving functors, and natural transformations, and the 2-category $\LFP$ of locally finitely presentable categories, finitary right adjoint functors (i.e. functors $R : \cH\to\cK$ with a left adjoint, and preserving filtered colimits) and natural transformations.

The idea is that a finitely complete category $C\in\Lex$ is a ``theory'', whose category of models $\Lex(C,\Set)$ is locally finitely presentable. Gabriel\hyp{}Ulmer duality says that all locally finitely presentable categories arise in this way, as it is possible to extract an essentially unique theory of which a given $\cK\in\LFP$ is the category of models.

\begin{assume}\label{assume}
	Let us spell out the assumptions and the notation of this subsection.
	\begin{enumerate}
		\item $\upsilon : \bsS\To\bsP$ is a context on $\cK$ in the sense of \autoref{def_context};
		\item and $\bsD$ a \gu envelope in the sense of \autoref{guenvelope};
		\item for every $\upsilon$-accessible category $\bsS G$ there exists a $\bsS$-Cauchy complete and $\bsP$-petit object $\hat G$ and an arrow $i: G \to \hat G$ such that $\bsS i$ is an equivalence.
	\end{enumerate}
\end{assume}
\begin{rmk}
	In the leading example of this paper, that is the case of $\bsP$ and $\bsInd_\lambda$, the third condition is met by virtue of \autoref{ccnoncambia}. Indeed, the two Cauchy-completions coincide in this case and the Cauchy completion of a class category is still small.
\end{rmk}
\begin{defn}[The 2-category $\upsilon\text{-}\Rex$]
	We define the 2-category $\upsilon\text{-}\Rex$ having 0-cells the $\bsP$-petit, $\bsS$-Cauchy complete, $\bsD$-cocomplete objects, 1-cells the $\bsD$\hyp{}cocontinuous cells, and all 2-cells between them.
\end{defn}
\begin{defn}[The 2-category $\upsilon\text{-}\LP$]
	The objects of the 2-category $\upsilon\text{-}\LP$ are $\upsilon$-presentable objects of $\cK$: by our \autoref{the-main}, this class coincides with $\upsilon$-accessible and cocomplete 0-cells; 1-cells are right adjoints that are $\bsS$\hyp{}cocontinuous cells according to \autoref{cocont_ar}, with all 2-cells of $\cK$ between them.
\end{defn}
This leads to our main theorem:
\begin{thm}[Gabriel\hyp{}Ulmer duality]\label{guduality}
	Under the assumptions of the subsection (\autoref{assume}), there is a bi-adjunction
	\[
		\yomod : \upsilon\text{-}\Rex^\coop \rightleftarrows \upsilon\text{-}\LP : \yoth
	\]
	which is in fact a bi-equivalence of 2-categories.
\end{thm}
\begin{proof}
	We start defining the action of two functors $\yomod$ and $\yoth$; the first is defined ``applying $\bsS$'', meaning that its action on 0- and 1-cells is determined as follows:
	\[
		\vcenter{\xymatrix{
		G \ar@[lightgray]@{^{(}->}@<-4pt>[r]_{\gray{ i_G}} & \color{lightgray} \bsD G \ar@[lightgray]@<-4pt>[l]_{\gray{ l_G}} & \rightsquigarrow &
		\bsS G \ar@[lightgray]@{^{(}->}@<-4pt>[r]_{\gray{\bsS i_G}} & \color{lightgray} \bsS \bsD G \ar@[lightgray]@<-4pt>[l]_{\gray{\bsS l_G}}\\
		G \ar[d]_f \ar@[lightgray]@{^{(}->}@<-4pt>[r]_{\gray{ i_G}} & \color{lightgray} \bsD G \ar@[lightgray][d]^{\gray{ \widehat f}} \ar@[lightgray]@<-4pt>[l]_{\gray{ l_G}} & \rightsquigarrow & \bsS G'\ar@{<-}[d]_{\bsS^* f} \ar@[lightgray]@{^{(}->}@<-4pt>[r]_{\gray{\bsS i_{G'}}} & \color{lightgray} \bsS \bsD G' \ar@[lightgray]@<-4pt>[l]_{\gray{\bsS l_{G'}}}\ar@{<-}@[lightgray][d]^{\gray{ \bsP^* f}}\\
		G' \ar@[lightgray]@{^{(}->}@<-4pt>[r]_{\gray{ i_{G'}}} & \color{lightgray} \bsD G' \ar@[lightgray]@<-4pt>[l]_{\gray{ l_{G'}}}& \rightsquigarrow & \bsS G \ar@[lightgray]@{^{(}->}@<-4pt>[r]_{\gray{\bsS i_G}} & \color{lightgray} \bsS \bsD G \ar@[lightgray]@<-4pt>[l]_{\gray{\bsS l_G}}
		}}
	\]
	and on 2-cells it acts again as $\bsS$ ($\bsS^* f$ is the right adjoint to $\bsS_! f$ appearing in \autoref{Zf-has-adjoints}). We have to check that this really defines a functor taking values in $\upsilon\text{-}\LP$; this is easily seen, as every $\bsS G$ is $\bsP$-cocomplete by \autoref{production}, and each $\bsS^* f$ is $\bsS$\hyp{}cocontinuous by \autoref{invimgiscocont}.

	Now we define the correspondence $\yoth$; on objects we send $A \simeq \bsS G$ into $G$ (by the assumption of the subsection (\autoref{assume}), we can assume that $G$ is $\bsS$-Cauchy complete); of course, we have to check that this is a well-defined assignment: in order to do that, in particular, we must check that $G$ is uniquely determined by $\bsS G$, and that it is $\bsP$-petit in the sense of \autoref{petit_obj}, and $\bsD$-cocomplete, in the sense of \autoref{cocomplete_obj}.
	\begin{enumerate}
		\item Petiteness is ensured by the definition of accessibility, \autoref{yonacc}.
		\item If $\bsS G\simeq \bsS G'$, we can use $\bsS$-Cauchy completeness to build an equivalence $G\simeq G'$ to the effect that $G$ is unique up to equivalence of objects by \autoref{moritaeq}.
		\item $G$ is $\bsD$-cocomplete by \autoref{production}.
	\end{enumerate}
	To define the correspondence $\yoth$ on 1-cells (and on 0-cells as a consequence), we send a right adjoint $f_*: \bsS G'\to \bsS G$ (whose right adjoint is $f^*$) to the 1-cell $u: G\to G'$ induced by the $\bsS$-Cauchy completeness of $G$ (cf. \autoref{caucau}). We need to check that this is $\bsD$\hyp{}cocontinuous. Since this is equivalent to being a pseudomorphism of pseudo algebras, we need to check that the diagram below commutes.
	\[
		\vcenter{\xymatrix{
			\bsD G\ar[r]^{\bsD u}\ar[d]_{l_G} & \bsD G' \ar[d]^{l_{G'}}\\
			G \ar[r]_u & G'
		}}
	\]
	We will show that $\sigma_{G'} \circ u \circ  l_{G}  \cong  \sigma_{G'} \circ l_{G'} \circ \bsD u$, which is enough by the fact that $\sigma_{G'}$ is a pseudo monomorphism (cf. \autoref{pseudomono}). The pseudo equation above is the front face the following elsewhere commutative diagram.
	\[
		\vcenter{\xymatrix@R=6mm@C=6mm{
		& \bsS\bsD G \ar[dd]|\hole\ar[rr]^{\bsS\bsD u}&& \bsS\bsD G' \ar[dd]\\
		\bsD G\ar[ur]^{\sigma_{\bsD G}} \ar[rr]|(.6){\bsD u}\ar[dd]_{l_G}&& \bsD G'\ar[dd]^(.35){l_{G'}} \ar[ur]_{\sigma_{\bsD G'}}\\
		& \bsS G\ar[rr]|\hole^(.35){f^*} && \bsS G' \\
		G \ar[rr]_u \ar[ur]^{\sigma_G}&& G'\ar[ur]_{\sigma_{G'}}
		}}
	\]
	It follows at once that these correspondences define an equivalence of 2-categories.
\end{proof}
\section{Examples}
\begin{eg}[A $0$-example: categories and $\lambda$-presentability]
    In the 2-category of locally small categories, functors, and natural transformations, the `canonical context' where $\bsP$ is the construction of small presheaves and $\bsS$ the completion under $\lambda$-filtered colimits; this yields the classical notions of an accessible and presentable object given in \cite{Adamek1994}.

    The \gu envelope $\bsD_\lambda A$ here is the $\lambda$-colimit completion of $A$; it is easy to see that this \kz{} satisfies the assumptions of \autoref{caucau}, and Gabriel\hyp{}Ulmer duality takes its canonical form (as exposed, for example, in \cite[3.1]{centazzo2002duality}).
\end{eg}
\begin{eg}[The standard example: categories and $\mathbb D$-presentability]
    The former example can be generalised to the case where the context has the form $\upsilon_{\textsc{c},\sD} : \sD\text{-}\bsInd \To \bsP$ and $\sD$ is a `sound' doctrine in the sense of \cite{adamek2002classification}; here, the notion of accessible and presentable object coincide with the notions of $\sD$-\emph{accessible} and \emph{locally $\sD$-presentable} category given in \cite{adamek2002classification}. \cite[76]{centazzo2004generalised} proves that the \gu envelope $\bsD_{\sD} A$ is the $\sD$-colimit completion of $A$; the representation theorem appears in \cite[78]{centazzo2004generalised}. Gabriel\hyp{}Ulmer duality in this context is one of the central result of \cite{centazzo2004generalised}.

    Note that the simplest example of all (the empty doctrine $\sD= \varnothing$) yields a ``trivial'' context, namely $\id_{\bsP} : \bsP \to \bsP$, where accessible objects are precisely presheaf objects.
\end{eg}
\begin{eg}[Posets]
    The collection $\Pos$ of partially ordered classes and monotone class functions becomes a 2-category once $\Pos(P,Q)$ is endowed with the pointwise partial order between such functions. Sending $A\in\Pos$ into $\bsP A \coloneqq \Pos(A, \{0<1\})$ determines a reasonable ``presheaf construction'' on $\Pos$ (this was first noted in \cite{street1978yoneda}). The completion under directed colimits $\bsS$ is well known to the community of poset theory and coincides with the ideal completion $\boldsymbol{Idl}(A)$ of a poset. The locally presentable objects in this Yoneda structure are the \emph{algebraic lattices} in the sense of \cite{porst2011algebraic}, while the accessible objects are ``accessible posets'' (there does not seem to be a name for these categories, but they are just posetal categories that are accessible in $\CAT$); the representation theorem is the content of \cite{porst2011algebraic}. The results in Porst's paper seem to pave the way to a form of Gabriel\hyp{}Ulmer duality; our approach seems to clarify how and why it is so.
\end{eg}
\begin{eg}[Enriched categories]\label{enrich}
    Let $V$ be a locally presentable, monoidal closed category. The notion of an accessible and presentable object in the 2-category of $V$-enriched categories, $V$-enriched functors and $V$-natural transformations, with its natural Yoneda structure having $\bsP A = [A^\op,V]$ has been the subject of a series of works  \cite{borceux1996enriched,borceux1998theory,kelly1982structures}; more in detail, the first two papers establish the theory of accessibility, and the last proves Gabriel\hyp{}Ulmer duality in enriched context. There exists a suitable definition of ``Ind-completion'' worked out in \cite{borceux1998theory}, and \cite[Cor. 3.6]{borceux1996enriched} proves the representation theorem for $V$-enriched categories (a slightly less general version of this result appears as \cite[7.3]{kelly1982structures}). \cite[9.3]{kelly1982structures} proves the existence of a \gu envelope.
\end{eg}
\begin{eg}[Metric spaces and $\one{Ab}$-categories]\label{met-and-ab}
    The former example contains several interesting particular examples:
    \begin{itemize}
        \item if the base of enrichment $V = ([0,\infty],\ge)$ is the monoidal category of non\hyp{}negative real numbers with opposite order, we recover \emph{Lawvere metric spaces} \cite{LawvereFW:metsgl}; the Yoneda structure is given by the ``metric Yoneda embedding'' $X\to [X, V]$. The former example specializes to this context, but $V$-enriched Ind-completion, the \gu envelope and the representation theorem do not seem (to the best of our knowledge) to admit a topological characterisation.
        \item given an additive category $\cA$, regarded as a particular preadditive (=$\one{Ab}$-enriched) category, its Yoneda map shall be $\cA\to [\cA^\op,\one{Ab}]$ in the category of $\one{Ab}$-enriched functors; this is indeed the case; it is easily seen that presentable objects for this KZ context recover the usual theory of locally presentable additive categories.
    \end{itemize}
\end{eg}

\subsection{Homotopical vistas: Derivators}
The original motivation for this paper was to find a good notion of locally presentable and accessible \emph{derivator} which could resemble and possibly put in broader perspective \cite[3.4]{Renaudin2009}, who proposed a definition of \emph{dérivateur de pétite presentation}, and thus a tentative notion of presentable object in the 2-category of prederivators. We aimed to show that this is the `correct' notion of presentability in a 2-category of (pre)derivators.

Grothendieck introduced derivators in order to correct the many shortcomings of triangulated category theory, all rooted in the fact that the embedding of a morphism $f : A \to B$ in a `fibre sequence' $A\xto{f} B\to C\to \Sigma A$ is not functorial. The idea is as simple as follows: instead of looking at a single homotopy category $\text{ho}(\cM)$ of a homotopical category $\cM$, one has to consider the assignment $J\mapsto \text{ho}(\cM^J)$ that sends a small category $J\in \cC\subseteq\Cat$ to the homotopy category of $J$-shaped diagrams valued in $\cM$. If the subcategory $\cC$ of $\Cat$ is large enough to recover information about $\cM$-valued homotopy coherent diagrams, we get a fairly better-behaved object.

Finding enough structure to do formal category theory inside the 2-category of (pre)derivators is a fairly natural idea, having a mass of tangible consequences: such a formal implant would yield a systematic rewriting of derivator theory \cite{groth2013derivators,MR3695365,groth2014tilting,Groth2014d}, that had many applications in the last few years but whose 2-category theory is, to the present day, poorly understood. Moreover, the presence of a context in the sense of our \autoref{def_context} would yield a sufficiently strong form of adjoint functor theorem to be useful in applications: such a fundamental result is absent from the literature to the present day.

So far, the idea seems a practical and sensible proposal. Suddenly, though, several technical and conceptual problems get in the way: building on prior work of Street \cite{street1981conspectus}, it is possible to endow the 2-category of prederivators with a \kz underlying a `variable' Yoneda structure; however, this particular choice of doctrine is of little practical use for homotopy-theoretic driven applications, as it is mostly rooted in the choice of a big enough cardinal $\kappa$ for which all categories $\sD I$ are small, and on the `variable' version of the Yoneda embedding $I\mapsto \Cat(\sD I^\op,{\sf SET})$.

Such categories are always \emph{homotopy} complete and cocomplete. In contrast, the Street Yoneda structure on variable categories described in \cite{street1981conspectus} captures as cocomplete objects those that are \emph{discretely} co/complete: the intersection between these two classes of co/complete objects is of null practical interest.

As true as it may be that the current presentation is just based on an abstract doctrine $\bsP$, it is also true that all interesting examples of a context arise when $\bsP$ has the form of a free cocompletion of some form, and $\bsS$ is a cocompletion under fewer shapes of diagrams. For this reason, our investigation was, until now, based on the following concrete problem: how can one find a doctrine $\bsP$ on the 2-category of prederivators, whose behaviour mimics the presheaf construction?

In order to find it, one shall study prederivators of the form $J\mapsto \text{ho}(\sSet^J)$; this is certainly a natural proposal for what a `cocompletion' operation would have to do, and in fact, it goes in the direction of \cite{Renaudin2009}. The doctrine so determined shall now be able to recognise as $\bsP$-cocomplete objects precisely the objects we expect to be cocomplete in the informal sense. At the same time, it would be nice to have a clear understanding of what a sequentially homotopy cocomplete object should be to make sense of the doctrine $\bsS$ in a homotopy invariant world.

Thus, the present section leaves the reader with more questions than answers and more failed bets than winning hands.
\appendix
\section{Admissible functors of \texorpdfstring{$\Cat$}{Cat}}

This short appendix collects a family of results whose aim is to provide intuition for $\bsP$-admissible functors. We decide not to prove all of them, as we do not use them in the paper, but they can be helpful in the process of understanding $\bsP$-admissible functors.

\begin{rmk}[Characterisation of admissible functors for $\bsP$]\label{char_adm}
  Admissible arrows in $\Cat$ with respect to the small presheaves can be characterized as follows: $f : A \to B$ is admissible if and only if, for each object $b \in B$, the functor $B(f-,b): A^\op \to \Set$ is a small presheaf or, equivalently, if the functor $B(f-,-): B \to \bsP A $ is well defined.

  The reason is that given a functor $K$ in a triangle
  \[
    \vcenter{\xymatrix{
    B\ar[r]^{K} & \bsP A \dltwocell<\omit>{}\\
    & A\ar[u]_{\yon_A}\ar@/^1.5pc/[ul]^f
    }}
  \]
  filled with a 2-cell $\alpha$ assuming there is a functor $B(f,1) : B \to \bsP A$, there is a well-defined map
  \[\Nat(B(f,1), K)\to \Nat(B(f,1)\circ f, K\circ f)\to \Nat(\yon_A, K\circ f) \]
  sending $\alpha$ into $(\alpha * f)\circ \varphi_f$, which is bijective thanks to the fact that each $B(f\firstblank,b)$ is a small presheaf; this allows for the colimit that defines the (pointwise) Kan extension $\Lan_f\yon_A$ to exist, and evidently now $B(f,1)$ has the universal property of $\Lan_f\yon_A$
\end{rmk}

\begin{rmk}[$\bsP$-admissible functors and the Solution Set Condition]\label{solsetcond}
  Freyd introduced the solution set condition as a concrete way to check whether a functor has a chance of being an adjoint. More generally, functors verifying the solution set condition are very tame. $\bsP$-admissibility should be understood as a conceptual understanding of the solution set condition. The two notions are almost equivalent; see the discussion in \cite{ulmer1971adjoint}.
\end{rmk}


\begin{rmk}[$\bsP$-admissible functors are very tractable]
  $\bsP$-admissible functors have not been explored in the literature, but we shall push the statement that they are a very good family of functors to study. In order to do so, let us prove two theorems that show how $\bsP$-admissible functors behave nicely in $\CAT$.
\end{rmk}

\begin{prop}\label{admis_cond}
  Let $f: A \to B$ be functor between locally small categories. Then, the following are equivalent.
  \begin{enumerate}
    \item f is $\bsP$ admissible;
    \item for every functor $g: A \to C$, where $C$ is a small-cocomplete category, the Kan extension $\lan_f g$ exists and is point-wise.
  \end{enumerate}
\end{prop}

\begin{cor}
  Let $f: A \to B$  functor between (small)-cocomplete categories. Then, the following are equivalent.
  \begin{enumerate}
    \item $f$ is a left adjoint;
    \item $f$ is cocontinuous and $\bsP$-admissible.
  \end{enumerate}
\end{cor}

\newpage
\bibliography{allofthem}{}

\providecommand{\bysame}{\leavevmode\hbox to3em{\hrulefill}\thinspace}
\providecommand{\MR}{\relax\ifhmode\unskip\space\fi MR }
\providecommand{\MRhref}[2]{%
  \href{http://www.ams.org/mathscinet-getitem?mr=#1}{#2}
}
\providecommand{\href}[2]{#2}
\begin{thebibliography}{ABLR02}

\bibitem[ABLR02]{adamek2002classification}
J.~Ad{\'a}mek, F.~Borceux, S.~Lack, and J.~Rosick{\`y}, \emph{A classification
  of accessible categories}, Journal of Pure and Applied Algebra \textbf{175}
  (2002), no.~1-3, 7--30.

\bibitem[AR94]{Adamek1994}
J.~Ad{\'a}mek and J.~Rosick{\'y}, \emph{Locally presentable and accessible
  categories}, London Mathematical Society Lecture Note Series, vol. 189,
  Cambridge University Press, Cambridge, 1994.

\bibitem[ARV11]{adamek2011algebraic}
J.~Ad{\'a}mek, J.~Rosick{\`y}, and E.M. Vitale, \emph{Algebraic theories},
  Cambridge Tracts in Mathematics \textbf{184} (2011), 1.

\bibitem[BD86]{CTGDC_1986__27_2_133_0}
F.~Borceux and D.~Dejean, \emph{Cauchy completion in category theory}, Cahiers
  de Topologie et G\'eom\'etrie Diff\'erentielle Cat\'egoriques \textbf{27}
  (1986), no.~2, 133--146 (en). \MR{850528}

\bibitem[BF99]{bunge}
M.~Bunge and J.~Funk, \emph{On a bicomma object condition for {KZ}-doctrines},
  J. Pure Appl. Algebra \textbf{143} (1999), no.~1-3, 69--105.

\bibitem[Bor94]{Bor1}
F.~Borceux, \emph{Handbook of categorical algebra. 1}, Encyclopedia of
  Mathematics and its Applications, vol.~50, Cambridge University Press,
  Cambridge, 1994, Basic category theory.

\bibitem[BQ96]{borceux1996enriched}
F.~Borceux and C.~Quinteiro, \emph{Enriched accessible categories}, Bulletin of
  the Australian Mathematical Society \textbf{54} (1996), no.~3, 489--501.

\bibitem[BQR98]{borceux1998theory}
F.~Borceux, C.~Quinteiro, and J.~Rosick\'y, \emph{A theory of enriched
  sketches}, Theory and Applications of Categories \textbf{4} (1998), no.~3,
  47--72.

\bibitem[Cen04]{centazzo2004generalised}
C.~Centazzo, \emph{Generalised algebraic models}, Presses universitaires de
  Louvain, 2004.

\bibitem[CV02]{centazzo2002duality}
Claudia Centazzo and EM~Vitale, \emph{A duality relative to a limit doctrine},
  Theory and Applications of Categories \textbf{10} (2002), no.~20, 486--497.

\bibitem[DL07]{day2007limits}
B.J. Day and S.~Lack, \emph{Limits of small functors}, Journal of Pure and
  Applied Algebra \textbf{210} (2007), no.~3, 651--663.

\bibitem[GPS14]{Groth2014d}
M.~Groth, K.~Ponto, and M.~Shulman, \emph{The additivity of traces in monoidal
  derivators}, J. K-Theory \textbf{14} (2014), no.~3, 422--494.

\bibitem[Gro13]{groth2013derivators}
M.~Groth, \emph{Derivators, pointed derivators and stable derivators},
  Algebraic \& Geometric Topology \textbf{13} (2013), no.~1, 313--374.

\bibitem[G{\v{S}}14]{groth2014tilting}
M.~Groth and J.~{\v{S}}ťov{\'\i}{\v{c}}ek, \emph{Tilting theory via stable
  homotopy theory}, Journal f{\"u}r die reine und angewandte Mathematik
  (Crelles Journal) (2014).

\bibitem[G{\v{S}}18]{MR3695365}
\bysame, \emph{Abstract tilting theory for quivers and related categories},
  Ann. K-Theory \textbf{3} (2018), no.~1, 71--124.

\bibitem[GU71]{Gabriel1971}
P.~Gabriel and F.~Ulmer, \emph{Lokal pr\"asentierbare {K}ategorien}, Lecture
  Notes in Mathematics, Vol. 221, Springer-Verlag, Berlin, 1971.

\bibitem[Kel82]{kelly1982structures}
G.M. Kelly, \emph{Structures defined by finite limits in the enriched context,
  {I}}, Cahiers de topologie et g{\'e}om{\'e}trie diff{\'e}rentielle
  cat{\'e}goriques \textbf{23} (1982), no.~1, 3--42.

\bibitem[Kel05]{Kelly2005}
\bysame, \emph{Basic concepts of enriched category theory}, Repr. Theory Appl.
  Categ. (2005), vi+137 pp. (electronic), Reprint of the 1982 original
  [Cambridge Univ. Press, Cambridge; MR0651714].

\bibitem[Koc95]{kock1995monads}
A.~Kock, \emph{Monads for which structures are adjoint to units}, J. Pure Appl.
  Algebra \textbf{104} (1995), no.~1, 41--59. \MR{1359690}

\bibitem[KS06]{KS2}
Masaki Kashiwara and Pierre Schapira, \emph{Categories and sheaves},
  Grundlehren der Mathematischen Wissenschaften [Fundamental Principles of
  Mathematical Sciences], vol. 332, Springer-Verlag, Berlin, 2006.

\bibitem[Law73]{LawvereFW:metsgl}
W.~Lawvere, \emph{Metric spaces, generalised logic, and closed categories},
  Rendiconti del Seminario Matematico e Fisico di Milano, vol.~43, Tipografia
  Fusi, Pavia, 1973.

\bibitem[LT21]{lack2021flat}
S.~Lack and G.~Tendas, \emph{Flat vs. filtered colimits in the enriched
  context}, \arXivPreprint{2107.08612} (2021).

\bibitem[Mar97]{marmolejo1997doctrines}
F~Marmolejo, \emph{Doctrines whose structure forms a fully faithful adjoint
  string}, Theory and applications of categories \textbf{3} (1997), no.~2,
  24--44.

\bibitem[Mog91]{MOGGI199155}
E.~Moggi, \emph{Notions of computation and monads}, Information and Computation
  \textbf{93} (1991), no.~1, 55--92, Selections from 1989 IEEE Symposium on
  Logic in Computer Science.

\bibitem[MP87]{makkai1987some}
M.~Makkai and A.M. Pitts, \emph{Some results on locally finitely presentable
  categories}, Transactions of the American Mathematical Society \textbf{299}
  (1987), no.~2, 473--496.

\bibitem[MP89]{makkai1989accessible}
M.~Makkai and R.~Par\'e, \emph{Accessible categories: the foundations of
  categorical model theory}, Contemporary Mathematics, vol. 104, American
  Mathematical Society, Providence, RI, 1989.

\bibitem[PCW00]{power2000representation}
A.J Power, G.L. Cattani, and G.~Winskel, \emph{A representation result for free
  cocompletions}, Journal of Pure and Applied Algebra \textbf{151} (2000),
  no.~3, 273--286.

\bibitem[Por11]{porst2011algebraic}
H.E. Porst, \emph{Algebraic lattices and locally finitely presentable
  categories}, Algebra universalis \textbf{65} (2011), no.~3, 285--298.

\bibitem[Ren09]{Renaudin2009}
O.~Renaudin, \emph{Plongement de certaines th\'eories homotopiques de {Q}uillen
  dans les d\'erivateurs}, J. Pure Appl. Algebra \textbf{213} (2009), no.~10,
  1916--1935.

\bibitem[Ros99]{ROSICKY1999261}
J.~Rosick\'y, \emph{Cartesian closed exact completions}, Journal of Pure and
  Applied Algebra \textbf{142} (1999), no.~3, 261 -- 270.

\bibitem[Ros09]{Rosicky2009}
\bysame, \emph{On combinatorial model categories}, Appl. Categ. Structures
  \textbf{17} (2009), no.~3, 303--316.

\bibitem[Str74]{StreetCosmoi1974}
R.~Street, \emph{Elementary cosmoi {I}}, Proceedings Sydney Category Theory
  Seminar 1972/1973 (Gregory~M. Kelly, ed.), Lecture Notes in Mathematics, vol.
  420, Springer, 1974, pp.~143--180.

\bibitem[Str81]{street1981conspectus}
\bysame, \emph{Conspectus of variable categories}, Journal of Pure and Applied
  Algebra \textbf{21} (1981), no.~3, 307--338.

\bibitem[Str83]{CTGDC_1983__24_4_377_0}
\bysame, \emph{Absolute colimits in enriched categories}, Cahiers de Topologie
  et G\'eom\'etrie Diff\'erentielle Cat\'egoriques \textbf{24} (1983), no.~4,
  377--379 (en). \MR{749468}

\bibitem[SW78]{street1978yoneda}
R.~Street and R.~Walters, \emph{Yoneda structures on 2-categories}, J. Algebra
  \textbf{50} (1978), no.~2, 350--379.

\bibitem[Ulm71]{ulmer1971adjoint}
Friedrich Ulmer, \emph{The adjoint functor theorem and the yoneda embedding},
  Illinois Journal of Mathematics \textbf{15} (1971), no.~3, 355--361.

\bibitem[Wal18]{walker}
C.~Walker, \emph{Yoneda structures and {KZ} doctrines}, J. Pure Appl. Algebra
  \textbf{222} (2018), no.~6, 1375--1387.

\bibitem[Z{\"o}b76]{zobbe}
V.~Z{\"o}berlein, \emph{Doctrines on 2-categories}, Mathematische Zeitschrift
  \textbf{148} (1976), no.~3, 267--279.

\end{thebibliography}
\bibliographystyle{amsalpha}
\hrulefill
\end{document}